\DeclareMathAlphabet{\mathpzc}{OT1}{pzc}{m}{it}
\DeclareMathOperator{\rank}{rk}
\DeclareMathOperator{\Hom}{Hom}
\DeclareMathOperator{\Sym}{Sym}
\DeclareMathOperator{\len}{len}
\DeclareMathOperator{\discr}{discr}
\newcommand{\DP}{\,{:}\,}
\newcommand{\ie}{{\it i.e. }}
\newcommand{\defeq}{\sim_{\text{def}}}
\newcommand{\p}[2]{p_{#1}^{#2}\;\!\!}
\renewcommand{\L}{\mathcal{L}}
\newcommand{\coloneqq}{:=}
\newcommand{\bra}{\left<\!\!\!\:\left<}
\newcommand{\ket}{\right>\!\!\!\:\right>}
\newcommand{\myeq}[1]{\mathrel{\overset{\makebox[0pt]{\text{\tiny #1}}}{=}}}
\newcommand{\G}{\mathbb{G}}
\newcommand{\R}{\mathbb{R}}
\newcommand{\Q}{\mathbb{Q}}
\newcommand{\Z}{\mathbb{Z}}
\renewcommand{\S}{\mathbb{S}}
\newcommand{\Har}{\mathpzc{H}}
\theoremstyle{plain}
\newtheorem{theorem}{Theorem}[section]
\newtheorem{lemma}[theorem]{Lemma}
\newtheorem{proposition}[theorem]{Proposition}
\newtheorem{corollary}[theorem]{Corollary}
\theoremstyle{definition}
\newtheorem{definition}[theorem]{Definition}
\theoremstyle{remark}
\newtheorem{remark}[theorem]{Remark}
\newtheorem{example}[theorem]{Example}
\begin{document}

\title[Symmetric Powers, Hom.~Orth.~Polynomials, Hyperk\"ahlers]{Symmetric Powers of Symmetric Bilinear Forms, Homogeneous Orthogonal Polynomials on the Sphere and an Application to Compact Hyperk\"ahler Manifolds}

\author{Simon Kapfer}
\address{Simon Kapfer, Laboratoire de Math\'ematiques et Applications, UMR CNRS 6086, Universit\'e de Poitiers, T\'el\'eport 2, Boulevard Marie et Pierre Curie, F-86962 Futuroscope Chasseneuil}
\email{simon.kapfer@math.univ-poitiers.fr}

\date{\today}
\subjclass[2010]{14C05, 15A63, 33C50}
\keywords{Symmetric Bilinear Forms on Symmetric Powers, Orthogonal Polynomials in Several Variables, Homogeneous Orthogonal Polynomials, Gegenbauer Polynomials, Ultraspherical Polynomials, Hermite Polynomials, Spherical Harmonics, Hankel matrices, Hyperk\"ahler Manifolds, Irreducible Holomorphic Symplectic Manifolds, Beauville--Bogomolov Form, Beauville--Fujiki relation}

\begin{abstract} The Beauville--Fujiki relation for a compact Hyperk\"ahler manifold $X$ of dimension $2k$ allows to equip the symmetric power $\Sym^kH^2(X)$ with a symmetric bilinear form induced by the Beauville--Bogomolov form. We study some of its properties and compare it to the form given by the Poincar\'e pairing.

The construction generalizes to a definition for an induced symmetric bilinear form on the symmetric power of any free module equipped with a symmetric bilinear form. We point out how the situation is related to the theory of orthogonal polynomials in several variables.
Finally, we construct a basis of homogeneous polynomials that are orthogonal when integrated over the unit sphere $\S^d$, or equivalently, over $\R^{d+1}$ with a Gaussian kernel.
\end{abstract}

\maketitle


\section{Introduction}
Our motivation originated in Hyperk\"ahler theory. The Beauville--Bogomolov--Fujiki form $q$ for a compact Hyperk\"ahler manifold $X$ is a quadratic form on the integral cohomology $H^2:=H^2(X,\Z)$, defined by an equation of the structure
\begin{equation} \label{initialeq}
q(x)^k = I(x^{2k}),
\end{equation}
where $x^{2k}$ means a power in the cohomology ring, and $I$ is a linear form (in fact, a scaled integral). 

Now every quadratic form $q$ has an associated symmetric bilinear form $\left<\ ,\;\right>$, obtained by polarization: $2\left<x,y\right> = q(x+y)-q(x)-q(y)$. This allows us to retrieve some information about $I$ from $\left<\ ,\;\right>$,
by comparing coefficients in the equality
\begin{equation*}
I\!\left((x_1+\ldots+x_{2k})^{2k}\right) = q(x_1+\ldots+x_{2k})^{k} = \left(\sum\limits_{i=1}^{2k} q(x_i) +\!\!\!\sum\limits_{1\leq i<j\leq 2k}\!\!2 \left<x_i,x_j\right>  \right)^k.
\end{equation*}
If we look at the summands belonging to the monomial $x_1\ldots x_{2k}$, we obtain a seemingly more general but in fact equivalent version of (\ref{initialeq}):
\begin{equation} \label{initialpolar}
 (2k)!\; I\!\left(x_1\ldots x_{2k}\right) = 2^k k!\sum_{\mathcal{P}} \prod_{\{i,j\}\in\mathcal{P}}\left<x_i,x_j\right>,
\end{equation}
where the sum is over all partitions $\mathcal{P}$ of $\{1,\ldots,2k\}$ into pairs. This is a classical observation, see also \cite[Eq.~3.2.4]{OGrady}. Let us develop this idea a bit further. The map $(f,g) \mapsto I(fg)$ clearly defines a symmetric bilinear form on the symmetric product $\Sym^kH^2$.
Equation (\ref{initialpolar}) gives now a redefinition of this form by means of a bilinear form on $H^2$. So we liberate ourselves from the initial setting and take the right hand side of (\ref{initialpolar}) as a general recipe to construct a symmetric bilinear form $\bra\ ,\;\ket$ on $\Sym^kV$ from a symmetric bilinear form on an appropriate space $V$. This is carried out in Section~\ref{symSection}. Our main result, Theorem~\ref{maintheorem}, gives a formula for the determinant of the Gram matrix of $\bra\ ,\;\ket$. 

If $V$ is a real vector space, then there is a notable description in terms of an analytic integral given in Prop.~\ref{intequiv}: After some simplifications this amounts to integrating homogeneous polynomials over a sphere. Essentially, we have:
$$ \bra f,g\ket = \int_{\S^d} f(\omega)g(\omega)d\omega $$
This is very comfortable, since it allows to use the whole bunch of techniques from calculus to investigate the algebraic properties of our construction.
Since we are interested in the determinant of the Gram matrix of $\bra\ ,\;\ket$
and for computing determinants it is good to have diagonal matrices, we look for polynomials that are mutually orthogonal on the sphere. The theory of orthogonal polynomials is well developped, and a basis of such polynomials is given by spherical harmonics, see Remark~\ref{sphericalharmonics}. But as spherical harmonics are not suitable for our determinant problem, we construct a different (and slightly simpler)
basis of homogeneous polynomials that are orthogonal on the sphere in Section~\ref{polynomialSection}.

After doing that, we come back to our starting point and apply our results to Hyperk\"ahler manifolds. The bilinear form on $\Sym^kH^2$ allows us to compare $\Sym^kH^2$ with $H^{2k}$. We give some results on torsion factors of the quotient $\frac{H^{2k}}{\Sym^kH^2}$ in Section \ref{hyper}, similar to those the author studied in \cite{Kapfer}.

Sections \ref{symSection}, \ref{polynomialSection} and \ref{hyper} treat rather different aspects and can be read independently.

\section{Terminology and helper formulas} \label{boring}
In this section 
we give a few standard definitions and recall some facts on elementary calculus and lattice theory. We also mention technical formulas needed for our proofs.
\subsection{Combinatorial formulas}
\begin{definition}\label{multiindex}
For a multi-index $\alpha=(\alpha_0,\ldots,\alpha_d)$ of length $\len(\alpha)\coloneqq d+1$ we define: $x^\alpha \coloneqq x_0^{\alpha_0}\ldots x_d^{\alpha_d}$. The degree is defined by $|\alpha |\coloneqq\sum\alpha_i$, the factorial is $\alpha! \coloneqq \prod \alpha_i!$. Further, we set
$\alpha'\coloneqq(\alpha_0,\ldots,\alpha_{d-1})$. We introduce the lexicographical ordering on multi-indices: $\alpha < \beta$ iff $\alpha_d < \beta_d$ or $(\alpha_d=\beta_d) \wedge (\alpha'<\beta')$.
\end{definition}
\begin{definition}
The binomial coefficient for nonnegative integers $k$ and arbitrary $z$ is defined as:
$\binom{z}{k} \coloneqq \frac{z(z-1)\ldots(z-k+1)}{k!}$. Thus we have $\binom{-z}{k}=(-1)^k\binom{z+k-1}{k}$. For negative $k$ we set $\binom{z}{k}\coloneqq 0$.
\end{definition}
We introduce the difference operator $\Delta f(n) \coloneqq f(n+1)-f(n)$. It has the following properties similar to the differential operator:
\begin{align}
 \sum_{i=0}^n \Delta(f) & = f\,\Big|_0^{n+1}= f(n+1)-f(0) && \text{(telescoping sum)} \\
 \Delta (fg)(n) &= f(n+1) \Delta g(n) + g(n)\Delta f(n) &&\text{(product rule)} \\
\label{sumbyparts}  \sum_{i=0}^n g(i)\Delta f(i) & = (fg)\Big|_0^{n+1} - \sum_{i=0}^n f(i+1)\Delta g(i) && \text{(summation by parts)}
\end{align}
This often applies to the binomial coefficient, since we have: 
\begin{equation} \label{binomdiff}
\textstyle \Delta \binom{n}{k}=\binom{n+1}{k}-\binom{n}{k} = \binom{n}{k-1}.
\end{equation}

Let $K$ be a commutative ring and let $r_{d,k} = \rank \left(\Sym^k K^{d+1} \right)$ be the rank of the symmetric power of a free $K$-module of rank $d\!+\!1$. 
Because we have the decomposition $\Sym^k\! K^{d+1} \cong \Sym^k\! K^d \oplus \,(\Sym^{k-1}\! K^{d+1})\!\otimes\! K$, we obtain the recurrence $r_{d,k} = r_{d-1,k} +r_{d,k-1}$. So we deduce:
\begin{equation} \label{binomcount} \textstyle
\binom{k+d}{d} =\binom{k+d}{k} = r_{d,k}=\rank \left(\Sym^k K^{d+1} \right) = \text{card}\big(\{|\alpha| =k\}\big).
\end{equation}

The following identity for integers $d,k\geq 0$ is proven by induction over $k$:
\begin{align} \label{facprod1}
\prod_{j=0}^k (k-j)!^{\binom{j+d-1}{d-1}} &= \ \prod_{i=1}^k i^{\binom{k-i+d}{d}},
\end{align}
where the induction step $k\rightarrow k+1$ produces a factor $\prod\limits_{i=1}^{k+1} i^{\binom{k-i+d}{d-1} }$ on both sides.

We will also need the identity:
\begin{equation} \label{evensum}
\sum_{\substack{i=1\\i\text{ even}}}^{2k+d+1}\textstyle \binom{k-i+d}{d-1} = \left\{ 
 \begin{array}{*2{c}p{5cm}}0 &\text{if }d\text{ is even}, \vspace{1mm}\\
 \binom{k+d}{d} &\text{if }d\text{ is odd},
\end{array}\right.
\end{equation}
which is proven by splitting the sum into:
$$
\sum_{\substack{i=1\\i\text{ even}}}^{k+1} \textstyle\binom{k-i+d}{d-1} + \displaystyle\sum_{\substack{i=k+d+1\\i\text{ even}}}^{2k+d+1}\textstyle \binom{k-i+d}{d-1}
= \displaystyle\sum_{\substack{i=1\\k-i\text{ even}}}^{k+1}\!\! \textstyle\binom{i+d-2}{d-1} +(-1)^{d-1} \!\!\!\!
 \displaystyle\sum_{\substack{i=1\\k+d+i\text{ even}}}^{k+1} \!\!\!\! \textstyle\binom{i+d-2}{d-1} .
$$

\begin{definition}\label{doublefactorial}
We define the double factorial for $ n\geq -1$ by 
$$n!! \,\coloneqq \prod_{i=0}^{\left\lfloor\!\frac{n-1}{2}\!\right\rfloor }(n-2i)=n(n-2)(n-4)\ldots $$
Clearly, $(n-1)!!\,n!! = n!$ and $(2n)!! = 2^n n!$.
\end{definition}
\begin{proposition} \label{partitioncount}
The number of partitions of the set $\{1,\ldots,2k\}$ into pairs equals $(2k-1)!! = \frac{(2k)!}{2^kk!}$.
\end{proposition}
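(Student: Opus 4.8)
The plan is to reduce the count to a one-step recurrence by distinguishing a single element, solve that recurrence, and then read off both closed forms. Write $P_{2k}$ for the number of partitions of $\{1,\dots,2k\}$ into pairs. In any such partition the element $2k$ lies in exactly one pair $\{j,2k\}$, and there are $2k-1$ admissible choices for its partner $j\in\{1,\dots,2k-1\}$. Deleting this pair sets up a correspondence between the partitions in which $2k$ is matched with $j$ and the pair-partitions of the remaining $(2k-2)$-element set, the latter being counted by $P_{2k-2}$ independently of $j$. Summing over the $2k-1$ choices of $j$ yields the recurrence $P_{2k}=(2k-1)\,P_{2k-2}$.

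First I would fix the base case $P_0=1$, the empty partition of the empty set. Unrolling the recurrence then gives $P_{2k}=(2k-1)(2k-3)\cdots 3\cdot 1$, which is exactly $(2k-1)!!$ by Definition~\ref{doublefactorial}. This establishes the first stated value.

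For the second closed form I would simply invoke the two identities recorded after Definition~\ref{doublefactorial}. Taking $n=2k$ in $(n-1)!!\,n!!=n!$ and using $(2k)!!=2^k k!$ gives $(2k)!=(2k-1)!!\,(2k)!!=(2k-1)!!\cdot 2^k k!$, hence $(2k-1)!!=\frac{(2k)!}{2^k k!}$, as claimed. (Alternatively one could obtain this form directly by counting the $(2k)!$ linear orderings of the set, then dividing by the $k!$ reorderings of the pairs and the $2^k$ internal swaps; I prefer the recurrence since it produces the double-factorial form immediately.)

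There is no genuinely hard step here; the only point that merits care is verifying that the deletion map in the recurrence is a well-defined bijection — that removing the pair containing $2k$ lands in partitions of a fixed $(2k-2)$-element set and that this correspondence does not depend on the chosen partner $j$. Once this is checked, the rest is a one-line induction together with the double-factorial identities already available in the excerpt.
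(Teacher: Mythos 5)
Your proof is correct and follows essentially the same argument as the paper: distinguish one element (you use $2k$, the paper uses $1$), count its $2k-1$ possible partners, remove the pair, and induct, then obtain the second closed form from the double-factorial identities following Definition~\ref{doublefactorial}. Your write-up is just a more detailed version of the paper's terse proof.
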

\begin{proof}
Given such a partition, look at the pair that contains the element $1$. There are $2k-1$ possible partners for this element; removing the pair leaves a partition of a set of cardinality $(2k-2)$ into pairs. Then proceed by induction.
\end{proof}
\begin{corollary} \label{multipairs}
Let $D_1,\ldots,D_n$ be disjoint finite sets with $|D_i|=\alpha_i$. Then the number of partitions of the set $D = D_1\cup\ldots\cup D_n$ into pairs, such that the elements of every pair come from the same $D_i$, is equal to $\prod_i (\alpha_i -1)!!$ if all $\alpha_i$ are even and $0$ otherwise.
\end{corollary}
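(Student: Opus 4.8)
The plan is to reduce the count to an independent product over the individual sets $D_i$ and then invoke Proposition~\ref{partitioncount}. The key observation is that the constraint ``both elements of every pair lie in the same $D_i$'' forces any admissible partition $\mathcal{P}$ of $D$ to split as a disjoint union $\mathcal{P} = \bigcup_i \mathcal{P}_i$, where each $\mathcal{P}_i$ is itself a partition of $D_i$ into pairs. Conversely, any choice of a pair-partition $\mathcal{P}_i$ on each $D_i$ assembles into a unique admissible partition of $D$. Hence the set of admissible partitions of $D$ is in bijection with the Cartesian product $\prod_i \{\text{pair-partitions of } D_i\}$.

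First I would record the elementary fact that a finite set admits a partition into pairs only when its cardinality is even, since an odd-sized set cannot be exactly covered by disjoint pairs. Consequently, if some $\alpha_i$ is odd, the corresponding factor in the product above is the empty set, so the Cartesian product is empty and the total count is $0$, in agreement with the claim.

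If instead every $\alpha_i$ is even, I would write $\alpha_i = 2k_i$ and apply Proposition~\ref{partitioncount} to $D_i$, which we may identify with $\{1,\ldots,2k_i\}$. This gives $(2k_i-1)!! = (\alpha_i-1)!!$ pair-partitions of $D_i$. The multiplication principle, applied to the bijection established above, then yields $\prod_i (\alpha_i-1)!!$ admissible partitions of $D$, which is exactly the asserted formula.

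There is no serious obstacle here; the only point requiring a moment's care is the bookkeeping of the bijection, namely verifying that the ``same-$D_i$'' condition is precisely what decouples the global partition into independent choices on the blocks $D_i$, with no interaction between pairs drawn from distinct blocks.
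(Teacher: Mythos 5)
Your proof is correct and follows exactly the route the paper intends: the ``same-$D_i$'' constraint decouples an admissible partition into independent pair-partitions of the blocks, each counted by Proposition~\ref{partitioncount}, with odd-sized blocks contributing zero. The paper leaves this deduction implicit (which is why the statement is a corollary), and your write-up simply makes that implicit argument explicit.
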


\subsection{Formulas from Calculus}
Denote $\Gamma(t) \coloneqq \int_{0}^{\infty}r^{t-1}e^{-r}dr$ the gamma function. It satisfies:
\begin{align}
\label{ffgamma}
n!&=\Gamma(n+1),\qquad (2n-1)!!\sqrt{\pi} =2^{n}\Gamma\left(n+\tfrac{1}{2}\right), \\
\label{doublegamma}
n!&\sqrt{\pi}=2^{n}\Gamma\left(\tfrac{n}{2}+1\right)\Gamma\left(\tfrac{n+1}{2}\right) ,\\
\int_0^\infty& r^se^{-\frac{1}{2}r^2} dr = 2^{\frac{s-1}{2}}\Gamma\left(\tfrac{s+1}{2}\right).
\end{align}
It follows, that:
\begin{align}    \label{monoint}
 \int_{\R^{d+1}}x^\alpha x^\beta & e^{-\frac{1}{2}\|x\|^2} dx = \ \ 
\displaystyle \prod_{i=0}^d \int_{-\infty}^\infty x_i^{\alpha_i+\beta_i} e^{-\frac{1}{2}x_i^2} dx_i \\
 &= \left\{
\begin{array}{*2{l}p{5cm}}
(2 \pi)^{\frac{d+1}{2}}\prod_{i=0}^d (\alpha_i+\beta_i-1)!! &\text{if all }\alpha_i+\beta_i\text{ are even}, \vspace{0.2cm} \\ 
 0 &\text{otherwise}.
\end{array}
  \right.\nonumber
\end{align}
The reader may also consult \cite{Folland} for that kind of calculus. In particular, \cite[Eq.~(4)]{Folland} yields:
\begin{lemma}\label{homosphere}
Let $f:\R^{d+1}\rightarrow\R$ be a continuous homogeneous function of degree $k$, that is $f(sx) = s^kf(x)\; \forall s\in\!\R$. Then, using polar coordinates $(r,\omega) = (\|x\|,\frac{x}{\|x\|})$:
\begin{align*}
\int_{\R^{d+1}}f(x)e^{-\frac{1}{2}\|x\|^2} dx &= \int_{\S^d}\!\int_0^\infty\! f(r\omega) r^d e^{-\frac{1}{2}r^2}dr d\omega \\
&= 2^{\frac{k+d-1}{2}}\Gamma\!\left(\tfrac{k+d+1}{2}\right)\int_{\S^d}f(\omega)d\omega .
\end{align*}
\end{lemma}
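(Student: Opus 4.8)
The plan is to establish the two displayed equalities in turn, each reducing to a fact already recorded above. The first equality is simply the passage to polar coordinates on $\R^{d+1}$, which I would invoke from \cite[Eq.~(4)]{Folland}: for any integrable $g$ one has $\int_{\R^{d+1}} g(x)\,dx = \int_{\S^d}\int_0^\infty g(r\omega)\,r^d\,dr\,d\omega$, the weight $r^d = r^{(d+1)-1}$ being the radial Jacobian in dimension $d+1$. Applying this with $g(x) = f(x)e^{-\frac{1}{2}\|x\|^2}$ and observing that $\|r\omega\| = r$ for $\omega\in\S^d$ yields the first line at once.

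For the second equality I would use homogeneity to decouple the radial and angular integrations. Since $f$ is homogeneous of degree $k$ we have $f(r\omega) = r^k f(\omega)$, so the integrand factors as $f(\omega)\,r^{k+d}e^{-\frac{1}{2}r^2}$. Pulling the $\omega$-dependent factor out of the inner integral gives
\[
\int_{\S^d} f(\omega)\left(\int_0^\infty r^{k+d}e^{-\frac{1}{2}r^2}\,dr\right)d\omega .
\]
The inner integral is evaluated by the Gamma-function formula already established, namely $\int_0^\infty r^s e^{-\frac{1}{2}r^2}\,dr = 2^{\frac{s-1}{2}}\Gamma(\tfrac{s+1}{2})$, specialized to $s = k+d$. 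This produces the constant $2^{\frac{k+d-1}{2}}\Gamma(\tfrac{k+d+1}{2})$, which does not depend on $\omega$ and so factors out of the angular integral, leaving precisely the asserted expression.

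There is essentially no serious obstacle here: the argument is a direct chain of a coordinate change, a scaling identity, and a known integral. The only points deserving a word of care are that the homogeneity relation $f(sx)=s^k f(x)$ is applied with the nonnegative scalar $s=r$, so no sign ambiguity intervenes, and that the exchange of the two integrations (Fubini) is legitimate. The latter follows from the continuity of $f$ on the compact sphere, which bounds $|f(\omega)|$, together with the rapid decay of the Gaussian factor, which makes $r^{k+d}e^{-\frac{1}{2}r^2}$ absolutely integrable on $(0,\infty)$.
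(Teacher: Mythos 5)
Your proof is correct and follows exactly the route the paper intends: the first equality is the polar-coordinate formula from \cite[Eq.~(4)]{Folland}, and the second comes from homogeneity $f(r\omega)=r^kf(\omega)$ together with the Gaussian radial integral $\int_0^\infty r^s e^{-\frac{1}{2}r^2}\,dr = 2^{\frac{s-1}{2}}\Gamma\!\left(\tfrac{s+1}{2}\right)$ at $s=k+d$. Your added remarks on Fubini and the nonnegativity of $r$ are sound but not points the paper felt the need to spell out.
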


\subsection{Lattices}\label{latticeSubsection}A reference for this subsection is Chapter 8.2.1 of \cite{Dolgachev}. By a lattice $L$ we mean a free $\Z$--module of finite rank, equipped with a non--degenerate, integer--valued symmetric bilinear form $\left<\ ,\;\right>$. By a homomorphism or embedding $L\subset M$ of lattices we mean a map $:L\rightarrow M$ that preserves the bilinear forms on $L$ and $M$ respectively. It is automatically injective. We always have the injection of a lattice $L$ into its dual space $L^*\coloneqq \Hom(L,\Z)$, given by $x \mapsto \left<x,\ \right>$. A lattice is called unimodular, if this injection is an isomorphism, \ie if it is surjective. By tensoring with $\Q$, we can interpret $L$ as well as $L^*$ as a discrete subset of the $\Q$--vector space $L\otimes \Q$. Note that this gives a kind of lattice structure to $L^*$, too, but the symmetric bilinear form on $L^*$ may now take rational coefficients.

If $L\subset M$ is an embedding of lattices of the same rank, then the index $|M\DP L|$ of $L$ in $M$ is defined as the order of the finite group $M/L$.
There is a chain of embeddings $L\subset M \subset M^* \subset L^*$ with $|L^*\DP M^*| =|M\DP L| $.

The quotient $L^*/L$ is called the discriminant group. The index of $L$ in $L^*$ is called $\discr L$, the discriminant of $L$.
Choosing a basis $(x_i)_i$ of $L$, we may express $\discr L$ as the absolute value of the determinant of the so--called Gram matrix $G$ of $L$, which is defined by $G_{ij}\coloneqq \left<x_i,x_j\right>$. $L$ is unimodular, iff $\det G =\pm 1$.

\begin{proposition} \label{squareDiscr}Let $M$ be a unimodular lattice. Let $L\subset M$ be a sublattice of the same rank. Then $|M\DP L|$ equals $\sqrt{\discr L}$.
\end{proposition}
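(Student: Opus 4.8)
The plan is to exploit the chain of embeddings $L\subset M\subset M^*\subset L^*$ recalled just above the statement, together with the multiplicativity of indices along such a chain. The crucial simplification comes from unimodularity: by definition $M$ is unimodular exactly when the canonical injection $M\hookrightarrow M^*$ is an isomorphism, so I would begin by identifying $M=M^*$. The four-term chain then collapses to the three-term chain $L\subset M\subset L^*$, all of whose terms have the same rank, so every quotient appearing is a finite group and all indices below are finite.

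Next I would compute the discriminant by splitting the index of $L$ in $L^*$ through the intermediate lattice $M$. Since $L\subset M\subset L^*$, multiplicativity of the index for a tower of finite-index subgroups gives $\discr L = |L^*\DP L| = |L^*\DP M|\cdot|M\DP L|$, using that $\discr L$ is by definition the index $|L^*\DP L|$.

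It then remains to recognize that the two factors on the right are equal. Here I would invoke the duality relation stated in the preceding paragraph, namely $|L^*\DP M^*|=|M\DP L|$, which holds for the embedding $L\subset M$ of lattices of equal rank. Substituting $M^*=M$ from the unimodularity step turns this into $|L^*\DP M|=|M\DP L|$. Feeding this back yields $\discr L = |M\DP L|\cdot|M\DP L| = |M\DP L|^2$, and taking the (positive) square root gives $|M\DP L|=\sqrt{\discr L}$, as claimed.

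The argument is essentially a bookkeeping exercise once the right tools are in place, so I do not expect a serious obstacle; the one point that deserves care is the duality relation $|L^*\DP M^*|=|M\DP L|$. Conceptually it reflects that dualizing reverses inclusions while preserving the size of the cokernel, and if it were not already granted from the discussion of lattices above, I would justify it by choosing compatible bases (or applying the theory of elementary divisors / Smith normal form to the inclusion map $L\hookrightarrow M$), which simultaneously diagonalizes the inclusion and its dual and makes the equality of the two indices transparent.
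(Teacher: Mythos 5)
Your proposal is correct and follows essentially the same route as the paper: it identifies $M=M^*$ by unimodularity, invokes the relation $|L^*\DP M^*|=|M\DP L|$ already granted in the preceding paragraph, and applies multiplicativity of the index along the chain $L\subset M\subset L^*$ to get $\discr L = |M\DP L|^2$. No gap to report.
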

\begin{proof}
Since $M$ is unimodular, $|L^*\DP M|=|L^*\DP M^*| =|M\DP L| $ and therefore $|L^*\DP L| = |L^*\DP M||M\DP L|  = |M\DP L|^2$.
\end{proof}
An embedding $L\subset M$ is called primitive, if the quotient $M/L$ is free. We denote by $L^\perp$ the orthogonal complement of $L$ within $M$. Since an orthogonal complement is always primitive, the double orthogonal complement $ L^{\perp\perp}$ is a primitively embedded overlattice of $L$. It is clear that $\discr( L^{\perp\perp})$ divides $\discr L$. 
\begin{proposition}\label{TorsionQuotient} Let $L\subset M$ be an embedding of lattices. Then the order of the torsion part of $M/L$ 
divides $\discr L$.
\end{proposition}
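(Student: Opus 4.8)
The plan is to reduce everything to a full-rank overlattice by passing to the primitive closure $L' \coloneqq L^{\perp\perp}$. As recalled just above, $L'$ is a primitively embedded overlattice of $L$; by definition of a primitive embedding this means that $M/L'$ is free, and of course $L \subseteq L'$. Because the bilinear form on $M$ is non-degenerate, forming the orthogonal complement twice does not change the rational span, so $L' \otimes \Q = L \otimes \Q$. In particular $L$ and $L'$ have equal rank, the form restricted to $L'$ is again integer-valued and non-degenerate (so $L'$ really is a lattice and $\discr L'$ is a positive integer), and the quotient $L'/L$ is finite.

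My first real step would be to identify the torsion subgroup of $M/L$ with $L'/L$. The inclusions $L \subseteq L' \subseteq M$ yield a short exact sequence $0 \to L'/L \to M/L \to M/L' \to 0$. Here $L'/L$ is finite, hence torsion, whereas $M/L'$ is free; thus any torsion element of $M/L$ maps to $0$ in the torsion-free $M/L'$ and therefore already lies in the image of $L'/L$. Consequently $\tors(M/L) = L'/L$, and its order is the index $|L' \DP L|$.

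It then remains to bound this index by the discriminant, which I would do with the index relation for dual lattices recalled in this subsection. Applying the chain $L \subseteq L' \subseteq {L'}^{*} \subseteq L^{*}$ together with $|L^{*} \DP {L'}^{*}| = |L' \DP L|$ and $\discr L = |L^{*} \DP L|$ gives
\[
\discr L = |L^{*} \DP {L'}^{*}|\cdot|{L'}^{*} \DP L'|\cdot|L' \DP L| = |L' \DP L|^{2}\cdot \discr L'.
\]
Since $\discr L'$ is a positive integer, $|L' \DP L|$ divides $|L' \DP L|^{2}$, which in turn divides $\discr L$; as $|L' \DP L| = |\tors(M/L)|$, this is exactly the assertion.

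I expect the only genuinely delicate point to be the identification $\tors(M/L) = L'/L$: one must be sure that the double orthogonal complement has the same rank as $L$ (equivalently, that it coincides with the saturation $(L\otimes\Q)\cap M$), so that $L'/L$ is finite while $M/L'$ stays free. Once that structural fact is pinned down, the discriminant computation is a routine consequence of the multiplicativity of indices already established above.
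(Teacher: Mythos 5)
Your proof is correct and takes essentially the same route as the paper's: both identify the torsion part of $M/L$ with $L^{\perp\perp}/L$ (using that $M/L^{\perp\perp}$ is free) and bound its order via the chain $L \subset L^{\perp\perp} \subset (L^{\perp\perp})^* \subset L^*$ together with the index relation $|L^{\perp\perp} \DP L| = |L^* \DP (L^{\perp\perp})^*|$. Your slightly finer bookkeeping, $\discr L = |L^{\perp\perp} \DP L|^2 \cdot \discr L^{\perp\perp}$, even recovers the stronger quadratic divisibility that the paper records separately in Corollary~\ref{latticeCor}, but the underlying argument is the same.
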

\begin{proof}
The torsion part is the index of $M/( L^{\perp\perp})$ in $M/L$. But this is equal to $|L^{\perp\perp}\DP L| = |L^* \DP (L^{\perp\perp})^*|$ and clearly divides $|L^*\DP L|$.
\end{proof}
\begin{proposition}\label{discrOrthPrim}
Let $M$ be unimodular. Let $L\subset M$ be a primitive embedding. Then $\discr L = \discr L^\perp$.
\end{proposition}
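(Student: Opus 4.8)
The plan is to find one finite abelian group whose order equals both $\discr L$ and $\discr L^\perp$; the natural candidate is $M/(L\oplus N)$, where I abbreviate $N\coloneqq L^\perp$. Two preliminary observations set up a symmetry between $L$ and $N$. First, since $L$ is non-degenerate as a lattice, $L\cap N=L\cap L^\perp=0$, so the sum $L+N$ is direct and, having full rank, is a finite-index sublattice of $M$. Second, because $L$ is primitively embedded in the unimodular lattice $M$, one has $N^\perp=L^{\perp\perp}=L$: the overlattice $L^{\perp\perp}$ is the saturation of $L$ in $M$, and primitivity says $L$ is already saturated. Thus $L$ and $N$ play perfectly symmetric roles, each being the orthogonal complement of the other.

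The crux is to prove $M/(L\oplus N)\cong L^*/L$. Consider the homomorphism $\phi\colon M\to L^*$ defined by $\phi(m)=\langle m,-\rangle|_L$; it is integer-valued because the form on $M$ is, and its kernel is precisely $\{m\in M:\langle m,\ell\rangle=0\ \forall\,\ell\in L\}=N$. The main point—where both hypotheses enter—is that $\phi$ is surjective. To see this I would dualize the short exact sequence $0\to L\to M\to M/L\to 0$: since $L$ is primitive, the quotient $M/L$ is free, hence $\mathrm{Ext}^1(M/L,\Z)=0$ and applying $\Hom(-,\Z)$ yields the exact sequence $0\to (M/L)^*\to M^*\to L^*\to 0$, so the restriction $M^*\to L^*$ is onto. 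Identifying $M^*\cong M$ via the unimodular form turns this restriction into $\phi$, which is therefore surjective. Consequently $\phi$ induces an isomorphism $M/N\cong L^*$ carrying the image of $L$ onto the canonical copy of $L$ inside $L^*$; passing to quotients gives $M/(L\oplus N)\cong L^*/L$, and hence $|M\DP L\oplus N|=\discr L$.

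It then remains only to invoke symmetry. Running the identical argument with the roles of $L$ and $N$ exchanged—legitimate because $N$ is again a primitive sublattice of the unimodular $M$ with $N^\perp=L$—yields $|M\DP L\oplus N|=|M\DP N\oplus N^\perp|=\discr N=\discr L^\perp$. Comparing the two expressions for the single index $|M\DP L\oplus N|$ gives $\discr L=\discr L^\perp$. I expect the only genuine difficulty to be the surjectivity of $\phi$; once that is established, everything else is the bookkeeping of index computations already developed in this subsection.
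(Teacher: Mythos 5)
Your proof is correct, and while it shares its skeleton with the paper's argument, the key mechanism is genuinely different. The paper works with the same map as your $\phi$ (in the guise of the orthogonal projection $M\otimes\Q\rightarrow L\otimes\Q$ restricted to $M$, whose kernel is $L^\perp$ and whose image lies in $L^*$), but it only uses \emph{injectivity} of the induced map $M/(L\oplus L^\perp)\rightarrow L^*/L$: combined with Proposition~\ref{squareDiscr}, which gives $|M\DP (L\oplus L^\perp)|=\sqrt{\discr L\,\discr L^\perp}$, this yields the inequality $\discr L^\perp\leq\discr L$, and the swap $L\leftrightarrow L^\perp$ (legitimate since $L^{\perp\perp}=L$ by primitivity) supplies the reverse inequality. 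You instead prove that $\phi$ is \emph{surjective}, using $\mathrm{Ext}^1(M/L,\Z)=0$ --- this is where primitivity enters for you --- together with the identification $M\cong M^*$ coming from unimodularity; this upgrades the injection to an isomorphism $M/(L\oplus L^\perp)\cong L^*/L$, so you obtain the exact index $|M\DP(L\oplus L^\perp)|=\discr L$ and never need Proposition~\ref{squareDiscr}, concluding again by the swap. Your route costs a little homological algebra but buys more: it identifies the ``glue group'' $M/(L\oplus L^\perp)$ with the discriminant group $L^*/L$, a statement strictly stronger than the proposition, and it localizes the roles of the two hypotheses cleanly (primitivity gives surjectivity, unimodularity gives $M\cong M^*$). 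In fact, had you combined your equality with Proposition~\ref{squareDiscr}, you could have skipped the symmetry step altogether: $\discr L=|M\DP(L\oplus L^\perp)|=\sqrt{\discr L\,\discr L^\perp}$ forces $\discr L=\discr L^\perp$ at once.
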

\begin{proof}
Consider the orthogonal projection $ : M\otimes\Q \rightarrow L\otimes \Q$. Its restriction to $M$ has kernel equal to $L^\perp$ and image in $L^*$. Hence we have an embedding of lattices $M/L^\perp \subset L^*$. Quotienting by $L$, we get an injective map $: M/(L\oplus L^\perp) \rightarrow L^*/L$. 
Now by Proposition~\ref{squareDiscr}, $\sqrt{\discr(L) \discr (L^\perp)} =|M \DP (L\oplus L^\perp)| \leq |L^*\DP L| = \discr L$. So we get $\discr L^\perp \leq \discr L$. Exchanging the roles of $L=L^{\perp\perp}$ and $L^\perp$ gives the inequality in the opposite direction.
\end{proof}
\begin{corollary}\label{latticeCor}
Let $L\subset M$ be an embedding of lattices with unimodular $M$. Let $n$ be the order of the torsion part of $M/L$. Then $\discr L^\perp =\discr L^{\perp\perp} = \frac{1}{n^2}\discr L$.
\end{corollary}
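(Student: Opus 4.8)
The plan is to deduce both equalities from the three preceding propositions together with one elementary observation about how the discriminant scales under a finite-index inclusion. The first thing to pin down is the integer $n$. From the proof of Proposition~\ref{TorsionQuotient} we already know that the torsion part of $M/L$ is exactly $L^{\perp\perp}/L$, since $M/L^{\perp\perp}$ is free (the embedding $L^{\perp\perp}\subset M$ being primitive). Hence $n = |L^{\perp\perp}\DP L|$. Moreover $L^{\perp\perp}$ is an overlattice of $L$, so $L$ and $L^{\perp\perp}$ have the same rank, which will be needed below.

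For the first equality $\discr L^\perp = \discr L^{\perp\perp}$, I would appeal directly to Proposition~\ref{discrOrthPrim}. Since an orthogonal complement is always primitively embedded, $L^\perp\subset M$ is a primitive embedding into the unimodular lattice $M$; that proposition then equates the discriminant of $L^\perp$ with the discriminant of its own orthogonal complement inside $M$, and this complement is by definition $L^{\perp\perp}$. This gives $\discr L^\perp = \discr L^{\perp\perp}$ with no computation.

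For the second equality I would first record the general scaling rule: if $L\subset N$ is an inclusion of lattices of the \emph{same} rank with finite index, then $\discr L = |N\DP L|^2\,\discr N$. This is the routine Gram-matrix computation — writing a basis of $L$ in terms of a basis of $N$ by an integer matrix $A$ with $|\det A| = |N\DP L|$, one has $G_L = A^\top G_N A$, whence $\det G_L = (\det A)^2\det G_N$. Applying this with $N = L^{\perp\perp}$ and $|N\DP L| = n$ yields $\discr L = n^2\,\discr L^{\perp\perp}$, that is $\discr L^{\perp\perp} = \frac{1}{n^2}\discr L$, which combined with the first equality completes the proof.

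I do not expect a serious obstacle here; the argument is essentially bookkeeping built on Propositions~\ref{TorsionQuotient} and~\ref{discrOrthPrim}. The only points that genuinely require care are the identification $n = |L^{\perp\perp}\DP L|$ (which rests on the freeness of $M/L^{\perp\perp}$) and the hypothesis that $L$ and $L^{\perp\perp}$ share the same rank, without which the matrix $A$ in the scaling rule would fail to be square and the determinant identity would break down.
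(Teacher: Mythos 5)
Your proof is correct and is essentially the argument the paper intends: the corollary is stated without proof as an immediate consequence of Proposition~\ref{discrOrthPrim} (applied to the primitive sublattice $L^\perp$, whose complement is $L^{\perp\perp}$) together with the identification $n=|L^{\perp\perp}\DP L|$ from the proof of Proposition~\ref{TorsionQuotient}, exactly as you spell out. Your Gram-matrix derivation of $\discr L = n^2\discr L^{\perp\perp}$ is a fine substitute for the equivalent index chain $|L^*\DP L| = |L^*\DP (L^{\perp\perp})^*|\,|(L^{\perp\perp})^*\DP L^{\perp\perp}|\,|L^{\perp\perp}\DP L|$ already set up in Section~\ref{latticeSubsection}.
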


\section{Symmetric Bilinear Forms on Symmetric Powers} \label{symSection}
Let $V$ be a vector space (or a free module) over a field (resp.~a commutative ring) $K$ of rank $d+1$ with basis $\{x_0,\ldots,x_{d}\}$, equipped with a symmetric bilinear form $\left<\,\ ,\ \right>: V\times V \rightarrow K$. We will freely identify the symmetric power $\Sym^kV$ with the space $K[x_0,\ldots,x_d]_k$ of homogeneous polynomials of degree $k$. 

There are at least two possibilities to define an induced bilinear form on $\Sym^kV$. We will use the following
\begin{definition} \label{formdef} On the basis $\{x_{n_1}\ldots x_{n_k}\;|\;0\leq n_1\leq\ldots\leq n_k\leq d\}$ of $\Sym^kV$, we define a symmetric bilinear form $\bra\ \,,\ \ket$ by: 
\begin{equation}
\label{mydef}
\bra x_{n_1}\ldots x_{n_k}\,,\,x_{n_{k+1}}\ldots x_{n_{2k}} \ket \coloneqq \sum_{\mathcal{P}} \prod_{\{i,j\}\in\mathcal{P}} \left<x_{n_i},x_{n_j}\right>,
\end{equation}
where the sum is over all partitions $\mathcal{P}$ of $\{1,\ldots,2k\}$ into pairs.
\end{definition}

We emphasize that this is not the only possibility. One could alternatively define
\begin{equation}\label{Garr}
\left(\!\left( x_{n_1}\ldots x_{n_k}\,,\, x_{m_1}\ldots x_{m_k}\right)\!\right) \coloneqq \sum_\sigma  
\prod_{i=1}^k \left< x_{n_i},x_{m_{\sigma(i)}}\right>,
\end{equation}
the sum being over all permutations $\sigma$ of $\{1,\ldots,k\}$, as studied by McGarraghy in \cite{McGarr}. However, this is a different construction that doesn't match the situation described in the introduction. We will \emph{not} consider it here.

If $U\in O(V)$ is an orthogonal transformation, then the induced diagonal action of $U^{\otimes k}$ on $\Sym^kV$ is orthogonal in both cases. This shows that the values of $\bra\ ,\;\ket$ and $\left(\!\left(\ ,\;\right)\!\right)$ are independent of the choice of the base of $V$ up to orthogonal transformation. 
\begin{example}
To contrast the two definitions, observe that in the case $k=2$
\begin{align}
\bra ab,cd\ket &= \left<a,c\right>\left<b,d\right>+\left<a,d\right>\left<b,c\right> + \left<a,b\right>\left<c,d\right>, \\
\left(\!\left( ab,cd \right)\!\right) &= \left<a,c\right>\left<b,d\right>+\left<a,d\right>\left<b,c\right>.
\end{align}
\end{example}
\begin{remark}
Note that (\ref{Garr}) does not require symmetry of the bilinear form $\left<\,\ ,\ \right>$ on $V$. Indeed, the definition would also be valid for an arbitrary bilinear form~$: V\times W \rightarrow K$, yielding a bilinear form $:\Sym^kV\times\Sym^kW\rightarrow K$. On the other hand, if the form on $V$ is not symmetric, then (\ref{mydef}) is not well-defined.
\end{remark}
\begin{remark}
The defining equation (\ref{mydef}) works equally well, if the two arguments have different degree. So we can easily extend our definition to a symmetric bilinear form~$\bra\ \,,\ \ket:\Sym^*V\times\Sym^*V \rightarrow K$. Then we have: $\bra a,bc\ket =\bra ab,c\ket$. Note that $\Sym^kV$ is in general not orthogonal to $\Sym^lV$ unless $k-l$ is an odd number.
\end{remark}
We wish to investigate some properties of this construction. Let $G$ be the Gram matrix of $\left< \ ,\;\right>$, \ie $G_{ij} = \left<x_i,x_j\right>$ and
let $\G$ be the Gram matrix of $\bra\ ,\;\ket$. We use multi-index notation, cf.~Definition~\ref{multiindex}.
\begin{proposition} \label{intequiv}Assume $K=\R$ and $G$ is positive definite, so its inverse $G^{-1} $ exists. Then $\bra\ ,\;\ket$ takes an analytic integral form:
\begin{equation*}
\bra x^\alpha, x^\beta \ket = \frac{1}{c}\int_{\R^{d+1}} x^\alpha x^\beta d\mu(x),
\end{equation*}
where the integration measure is $d\mu(x) = \exp\left(-\frac{1}{2}\sum_{i,j} G^{-1}_{ij}x_ix_j\right)dx$ and the normalization constant is $c=\int_{\R^{n+1}} d\mu(x)=\sqrt{(2\pi)^{d+1}\det G}$.
\end{proposition}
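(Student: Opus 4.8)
The plan is to recognize $\tfrac{1}{c}\,d\mu$ as the centered Gaussian probability measure on $\R^{d+1}$ whose covariance matrix is $G$ (its density has precision matrix $G^{-1}$), so that the claim is precisely the statement that the moment $\tfrac1c\int x^\alpha x^\beta\,d\mu$ coincides with the sum over pair partitions in Definition~\ref{formdef}. This is a form of Wick's (Isserlis') theorem, and rather than quote it I would prove it directly from the monomial integral (\ref{monoint}) and the pairing count of Corollary~\ref{multipairs}.

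First I would diagonalize the quadratic form. Since $G$ is positive definite there is an invertible matrix $S$ with $G = S S^{T}$ (for instance $S=G^{1/2}$, or a Cholesky factor). The substitution $x = S y$ turns the exponent into $-\tfrac12\|y\|^2$ and produces a Jacobian $|\det S| = \sqrt{\det G}$. Applied to the constant function this already gives $c = \sqrt{\det G}\int_{\R^{d+1}} e^{-\frac12\|y\|^2}\,dy = \sqrt{(2\pi)^{d+1}\det G}$, confirming the stated normalization; applied to $x^\alpha x^\beta$ the factor $\sqrt{\det G}$ cancels against $1/c$, leaving $\frac{1}{(2\pi)^{(d+1)/2}}\int_{\R^{d+1}}\prod_{i=1}^{2k}\bigl(\sum_{l} S_{n_i,l}\,y_{l}\bigr)\,e^{-\frac12\|y\|^2}\,dy$, where $n_1,\dots,n_{2k}$ list the indices of the monomial $x^\alpha x^\beta$ with multiplicity.

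Next I would expand the product and integrate term by term. By the monomial formula (\ref{monoint}), the normalized integral of $y_{l_1}\cdots y_{l_{2k}}$ equals $\prod_l (\nu_l-1)!!$ when every multiplicity $\nu_l$ of the index $l$ among $l_1,\dots,l_{2k}$ is even, and vanishes otherwise; by Corollary~\ref{multipairs} this product is exactly the number of pair partitions $\mathcal P$ of $\{1,\dots,2k\}$ that match only positions carrying the same index, i.e.\ it equals $\sum_{\mathcal P}\prod_{\{i,j\}\in\mathcal P}\delta_{l_i l_j}$. Interchanging the sum over $(l_1,\dots,l_{2k})$ with the sum over $\mathcal P$, for each fixed pairing the Kronecker deltas collapse each pair $\{i,j\}$ to a single inner summation $\sum_{l} S_{n_i,l}S_{n_j,l} = (SS^{T})_{n_i,n_j} = G_{n_i,n_j} = \left<x_{n_i},x_{n_j}\right>$. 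This reassembles precisely the right-hand side of (\ref{mydef}).

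The main obstacle is the combinatorial middle step: correctly matching the double-factorial values produced by the Gaussian integral (\ref{monoint}) with the count of admissible pair partitions in Corollary~\ref{multipairs}, and keeping the bookkeeping of the nested index sums $(l_i)$ straight, so that the factorization $G = SS^{T}$ can be used to turn $\sum_l S_{n_i,l}S_{n_j,l}$ back into the entries of $G$. Once this identification is in place, the remainder is the routine Gaussian change of variables already set up in the first step.
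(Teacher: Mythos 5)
Your proof is correct, but it takes a genuinely different route from the paper's. The paper exploits the invariance of both sides under orthogonal coordinate changes (noted right after Definition~\ref{formdef}) to reduce, via the spectral theorem, to the case where $G$ is diagonal; then the integral factors into one-dimensional Gaussian moments, the pair-partition sum in (\ref{mydef}) degenerates (only pairs of equal indices survive when $G$ is diagonal), and Corollary~\ref{multipairs} matches the two sides coordinate-wise, with no interchange of summations needed. You instead bypass the invariance argument entirely: you factor $G = SS^{T}$, substitute $x = Sy$, expand the product of linear forms $\prod_i\bigl(\sum_l S_{n_i,l}y_l\bigr)$, and prove Wick's (Isserlis') theorem for an arbitrary positive definite covariance in one stroke --- using (\ref{monoint}) together with Corollary~\ref{multipairs} to identify each Gaussian moment $\prod_l(\nu_l-1)!!$ with the count of pairings $\mathcal{P}$ compatible with the index string, and the identity $\sum_l S_{n_i,l}S_{n_j,l}=G_{n_i n_j}=\left<x_{n_i},x_{n_j}\right>$ to reassemble (\ref{mydef}) after swapping the sum over index strings with the sum over pairings. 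Both proofs rest on the same two ingredients, (\ref{monoint}) and Corollary~\ref{multipairs}, but your reduction mechanism (Cholesky factorization plus full Wick expansion) is self-contained and makes transparent that $\bra\ ,\;\ket$ \emph{is} the Gaussian moment functional of covariance $G$, at the cost of heavier index bookkeeping; the paper's reduction is shorter because after diagonalization everything decouples and the combinatorics collapses to the product case.
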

\begin{proof} Note that we need positive definiteness of $G$ to make the integral converge. We make use of the content in Section \ref{boring}.
First, observe that both sides of the equation are invariant under orthogonal transformations of the base space $\R^{d+1}$. We may therefore assume that $G= \text{diag}\left(a_0,\ldots,a_d\right)$ is a diagonal matrix. Then the integral splits nicely:
\begin{align*}
\frac{1}{c}\int_{\R^{d+1}}& x^\alpha x^\beta d\mu(x)= \frac{1}{c}\prod_{i=0}^d \int_{-\infty}^\infty x_i^{\alpha_i+\beta_i} e^{-\frac{1}{2a_i}x_i^2}dx_i \\=&\: 
\frac{1}{c}\prod_{i=0}^d a_i^{\frac{\alpha_i+\beta_i+1}{2}}\int_{-\infty}^\infty x^{\alpha_i+\beta_i} e^{-\frac{1}{2}x^2}dx\\
\myeq{(\ref{monoint})}\  &\left\{
\begin{array}{*2{l}p{5cm}}\displaystyle \prod_{i=0}^d a_i^{\frac{\alpha_i+\beta_i}{2}}(\alpha_i+\beta_i-1)!! &\text{if all }\alpha_i+\beta_i\text{ are even}, \vspace{0.2cm} \\ 
 0 &\text{otherwise}.
\end{array}
 \right.
\end{align*}
On the other hand, if $G$ is diagonal, then every partition into pairs in Equation (\ref{mydef}) that contains a pair of two different numbers will not contribute to the sum. Corollary~\ref{multipairs} shows then, that we get the same formula for $\bra x^\alpha ,x^\beta\ket$.
\end{proof}

The next theorem gives a formula for the determinant of $\G$. This is of particular interest when $K=\Z$, because in this case we are in the setting of lattice theory, and $\left|\det \G\right|$ is the discriminant of the lattice $\Sym^kV$.
\begin{theorem} \label{maintheorem}
The determinant of the Gram matrix $\G$ of $\bra\ ,\;\ket$, the induced bilinear form on $\Sym^kV,\ \rank V = d\!+\!1$, is:
\begin{equation}\label{maintheorem1}
\det(\G)= \det(G)^{\binom{d+k}{d+1}}\,\theta_{d,k}
\end{equation}
where $\theta_{d,k}$ is a combinatorial factor given by:
\begin{equation} \label{thetaDef}
\theta_{d,k} = \left\{
 \begin{array}{*2{l}p{5cm}}
 \displaystyle\prod_{i=1}^k i^{\binom{k-i+d}{d}d}\prod_{\substack{i=1 \\ i\ \text{odd}\\\ }}^{2k+d-1}i^{\binom{k-i+d}{d}} &\text{if }d\text{ is even}, \\
 \displaystyle\prod_{i=1}^k i^{\binom{k-i+d}{d}d}\prod_{i=1}^{k+\frac{d-1}{2}} i^{\binom{k-i+d}{d} - \binom{k-2i+d}{d}} &\text{if }d\text{ is odd}.
\end{array}
\right.
\end{equation}
\end{theorem}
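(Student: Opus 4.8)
The plan is to treat (\ref{maintheorem1}) as an identity of polynomials in the entries $G_{ij}$ and to reduce it, by density and symmetry, to a single combinatorial determinant. Each entry of $\G$ is, by Definition~\ref{formdef}, a sum of products of $k$ entries of $G$, so $\det\G$ is a polynomial in the $G_{ij}$ with integer coefficients, homogeneous of degree $k\,\rank(\Sym^kV)=k\binom{d+k}{d}$; the right-hand side is homogeneous of degree $(d+1)\binom{d+k}{d+1}$ in the $G_{ij}$, and these two degrees agree. Since an identity of integer polynomials holds as soon as it holds on the Zariski-dense set of real positive definite symmetric matrices, I may assume $K=\R$ and $G$ positive definite. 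Both sides are moreover unchanged under $G\mapsto U^\top G U$ for $U\in O(V)$: the left side because $\G$ is then replaced by the congruent matrix $(\Sym^k U)^\top\,\G\,(\Sym^k U)$, with $\det(\Sym^k U)=(\det U)^{\binom{d+k}{d+1}}=\pm1$ (the exponent being the same count $\sum_{|\alpha|=k}\alpha_i=\binom{d+k}{d+1}$ that reappears below), and the right side because $\det(U^\top G U)=\det G$. Diagonalizing, I am reduced to $G=\mathrm{diag}(a_0,\ldots,a_d)$.

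For diagonal $G$, Proposition~\ref{intequiv} together with (\ref{monoint}) gives the entries explicitly: $\bra x^\alpha,x^\beta\ket$ vanishes unless every $\alpha_i+\beta_i$ is even, in which case it equals $\big(\prod_i a_i^{(\alpha_i+\beta_i)/2}\big)\prod_i(\alpha_i+\beta_i-1)!!$. Hence $\G=D\,H\,D$, where $D$ is diagonal with $D_{\alpha\alpha}=\prod_i a_i^{\alpha_i/2}$, and $H_{\alpha\beta}=\prod_i(\alpha_i+\beta_i-1)!!$ when all $\alpha_i+\beta_i$ are even and $0$ otherwise. Thus $\det\G=(\det D)^2\det H$. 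By symmetry of the index set $\{|\alpha|=k\}$ in its $d+1$ coordinates and (\ref{binomcount}), one has $\sum_{|\alpha|=k}\alpha_i=\binom{d+k}{d+1}$ for each $i$, so $(\det D)^2=\prod_i a_i^{\binom{d+k}{d+1}}=(\det G)^{\binom{d+k}{d+1}}$. This isolates the claim: $\theta_{d,k}=\det H$ is a universal constant independent of the $a_i$, and the problem becomes the evaluation of a single combinatorial determinant.

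To compute $\det H$ I would diagonalize the form it represents, which (taking $G=I$ and using Lemma~\ref{homosphere}) is, up to a scalar, the pairing $(f,g)\mapsto\int_{\S^d}f\,g\,d\omega$ on homogeneous polynomials of degree $k$. Passing to a basis $\{p_\alpha\}_{|\alpha|=k}$ of homogeneous degree-$k$ polynomials that are mutually orthogonal for this pairing and whose transition matrix to the monomials $\{x^\alpha\}$ is unitriangular for the lexicographic order of Definition~\ref{multiindex}, the Gram matrix becomes diagonal, so $\det H=\prod_{|\alpha|=k}\bra p_\alpha,p_\alpha\ket$. Such a basis is exactly what is constructed in Section~\ref{polynomialSection}; granting it, each squared norm is an explicit product of double factorials, equivalently of ratios of Gamma values via (\ref{ffgamma})--(\ref{doublegamma}).

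The main obstacle is then the bookkeeping that shows $\prod_{|\alpha|=k}\bra p_\alpha,p_\alpha\ket$ collapses to the closed forms in (\ref{thetaDef}). I would group the factors according to the values taken by the individual exponents $\alpha_i$, so that the resulting product of factorials reorganizes, via the factorial-power identity (\ref{facprod1}), into the first factor $\prod_{i=1}^k i^{\binom{k-i+d}{d}d}$ common to both cases. The contribution of the distinguished last variable is governed by the parity sum (\ref{evensum}), whose vanishing for even $d$ and value $\binom{k+d}{d}$ for odd $d$ produces precisely the dichotomy of (\ref{thetaDef}); matching the remaining exponents $\binom{k-i+d}{d}$ (resp.\ $\binom{k-i+d}{d}-\binom{k-2i+d}{d}$) to the telescoping supplied by (\ref{binomdiff}) and (\ref{sumbyparts}) is the delicate step that completes the proof.
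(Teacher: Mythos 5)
Your proposal is correct and follows essentially the same route as the paper: orthogonal invariance reduces to diagonal $G$; the factor $\det(G)^{\binom{d+k}{d+1}}$ is extracted via $\sum_{|\alpha|=k}\alpha_i=\binom{d+k}{d+1}$ (your factorization $\G=DHD$ is just a slicker packaging of the paper's successive coordinate scalings $x_i\mapsto x_i/\sqrt{a_i}$ and its Leibniz-formula computation); the case $G=I$ is delegated to the lexicographically unitriangular orthogonal basis $(h_\alpha)$ of Section~\ref{polynomialSection} and the induction carried out there (the paper's Theorem~\ref{thetaCor}); and the passage from positive definite real $G$ to arbitrary $G$ over any commutative ring is the same Zariski-density/polynomial-identity argument, which you simply place at the beginning rather than the end. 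The one part you leave as a sketch --- collapsing $\prod_{|\alpha|=k}\bra h_\alpha,h_\alpha\ket$ to $\theta_{d,k}$ via (\ref{facprod1}), (\ref{evensum}), (\ref{binomdiff}) --- is exactly the content the paper proves in Theorem~\ref{thetaCor}, so your outline matches the paper's own division of labor.
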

\begin{remark} If $d$ or $k$ is small, this simplifies as follows:
\begin{gather*}
\theta_{d,0}=\theta_{d,1} =1,\qquad \theta_{d,2} = 2^{d}(d+3), \\
\theta_{0,k} = (2k-1)!!, \qquad \theta_{1,k} = (k!)^{k+1}.
\end{gather*}
\end{remark}
\begin{proof}
We prove the theorem in three steps.
Let us first consider the case when $V$ is a vector space over $\R$ and $G$ is positive definite. We further reduce this to the special case when $G$ is the identity matrix. That is the essential difficulty of the proof, which we will treat in Section~\ref{hsection}.

Any orthogonal transformation $U\in O(V)$ induces a transformation $U^{\otimes k} \in O(\Sym^k V)$ and thus doesn't affect determinants. Since over $\R$, every symmetric matrix can be diagonalized by applying an orthogonal coordinate change, we may assume that $G=\text{diag}\left(a_0,\ldots,a_d\right)$ is a diagonal matrix. 
Let us check, what happens if we apply a coordinate transformation $x\mapsto\tilde x$ that changes the last coordinate by $\tilde{x}_d = \gamma x_d$ and leaves the other coordinates invariant. Let $\tilde{G}$ and $\tilde{\G}$ be the Gram matrices corresponding to the new coordinates.
We clearly have: $\tilde{x}^\alpha = \gamma^{\alpha_d} x^\alpha$. Extracting the factor $\gamma$ from the Leibniz determinant formula, which is of the form $\det \tilde{\G}=\sum\limits_\sigma\pm\prod\limits_{|\alpha|=k} \bra \tilde{x}^{\alpha},\tilde{x}^{\sigma(\alpha)}\ket=\det \G\prod\limits_{|\alpha|=k} \gamma^{2\alpha_d}$, we get: 
\vspace{-2mm}
$$
\frac{\det \tilde{\G}}{\det\G} = \prod_{|\alpha|=k}\gamma^{2\alpha_d} = \prod_{i=0}^k\ \prod_{|\alpha '|=k-i} \gamma^{2i} \; \myeq{(\ref{binomcount})}\;\prod_{i=0}^k\gamma^{2i\binom{k-i+d-1}{d-1}} \; \myeq{(\ref{sumbyparts})} \; \gamma^{2\binom{d+k}{d+1}}.
$$
Now we apply successively coordinate transformations that map $x_i$ to $\frac{x_i}{\sqrt{a_i}}$. We get a factor $(a_0\ldots a_d)^{\binom{d+k}{d+1}} = \det G^{\binom{d+k}{d+1}}$ and we are left with an identity Gram matrix. The statement follows from Theorem~\ref{thetaCor}.

As a second step, still working over $\R$, we show that we can drop the condition that $G$ is positive definite. To see this, let $Q\subset \R^{(d+1)\times(d+1)}$ be the subspace of real symmetric square matrices of size $d+1$. Our formula (\ref{maintheorem1}) depends polynomially on the entries of $G$. The subset $R\subset Q$ of all matrices $G\in Q$ that satisfy (\ref{maintheorem1}) is therefore Zariski-closed. But on the other hand, the positive definite matrices form a nonempty subset $P\subset Q$ which is open in the analytic topology. So if $P\subset R$, then necessarily $R=Q$.

Finally, matrices with integer entries form a subset of real matrices. So (\ref{maintheorem1}) holds also for free $\Z$-modules $V$. But (\ref{maintheorem1}) is an identity living in $\Z[G_{ij}]$, so it holds true over any commutative ring $K$, simply by tensoring with $K$.
\end{proof}

\section{Homogeneous Orthogonal Polynomials on the sphere} \label{polynomialSection}
In this section we will construct a basis for the space of homogeneous polynomials of degree $k$ in $d+1$ variables, $\R[x_0,\ldots,x_d]_k$, that is orthogonal with respect to the bilinear form given by
\begin{equation} \label{spherebracket}
 \bra f,g\ket = \int_{\R^{d+1}}f(x)g(x) d\mu(x),
\end{equation}
where the measure is $d\mu(x) = (2\pi)^{-\frac{d+1}{2}}e^{-\frac{1}{2}\|x\|^2}dx$. In order to do this, we wish to apply the Gram-Schmidt process to the (lexicographically ordered) monomial basis $(x^\alpha)_{|\alpha |=k}$. Our result is stated in Subsection \ref{hsection}. 
\begin{remark}
Although the above definition of $\bra\ ,\;\ket$ doesn't mention the sphere, in view of Lemma \ref{homosphere}, we could equivalently consider the integral:
$$
 \bra f,g\ket = c_{d,k} \int_{\S^{d}}f(\omega)g(\omega) d\omega, \qquad c_{d,k} = 2^{\frac{k}{2}-1}\pi^{-\frac{d+1}{2}}\Gamma\!\left(\tfrac{k+d+1}{2}\right).
$$ 
This is the reason why we speak of polynomials orthogonal on the sphere. However, we prefer to integrate over $\R^{d+1}$, since this avoids the unwanted constant $c_{d,k}$.
\end{remark}

\begin{remark}\label{hermite}
We stress that this equivalency really depends on the homogeneity. Denote $\bra f ,g\ket_{\R^{d+1}} = \bra f,g\ket$ and $\bra f ,g\ket_{\S^{d}}=\int_{\S^{d}} f g$ for a moment and let us look at what happens if we drop the homogeneity constraint. Since $c_{d,k}$ depends on $k$, $\bra\ ,\;\ket_{\R^{d+1}}$ and $\bra\ ,\;\ket_{\S^d}$ aren't equivalent anymore. A basis of $\R[x_0,\ldots,x_d]$, consisting of $\bra f ,g\ket_{\R^{d+1}}$-orthogonal polynomials is given by products $H_{\alpha_0}\!(x_0)\ldots H_{\alpha_d}\!(x_d)$ of Hermite polynomials in one variable, see also \cite[Sect.~2.3.4]{Dunkl}. On the other hand, the form $\bra\ ,\;\ket_{\S^d}$ becomes degenerate on $\R[x_0,\ldots,x_d]$, because integration on the sphere can't distinguish between $1$ and the square radius $\|x\|^2$. 
\end{remark}

\begin{remark}\label{sphericalharmonics}
A $\bra\ ,\;\ket$-orthogonal basis of homogeneous polynomials which we won't consider here is given by spherical harmonics. Let $\Har^{d+1}_k \subset \R[x_0,\ldots,x_d]_k$ be the subspace of harmonic polynomials. By Theorem~1.3 and Proposition~1.4 of \cite{Dai}, there is an orthogonal decomposition
$$
\R[x_0,\ldots,x_d]_k = \Har^{d+1}_k \oplus r^2 \Har^{d+1}_{k-2}\oplus r^4 \Har^{d+1}_{k-4}\oplus \ldots
$$
where $r^2 = \|x\|^2= x_0^2+\ldots +x_d^2$. Orthogonal bases for each of the $\Har^{d+1}_k$ in turn are constructed in \cite[Sect.~2.2]{Dunkl}. However, the basis one obtains this way has nothing to do with monomials. In particular, the transition matrix between them is not triangular, so they are not related by a Gram-Schmidt process.
\end{remark}

\subsection{Generalities on orthogonal polynomials in one variable}
Given a nondegenerate symmetric bilinear form on the space of polynomials $K[x]$, one may ask for a basis of polynomials $(p_n)_n$ that are mutually orthogonal with respect to that form. To find such a basis, one could start with the monomial basis $(x^n)_n$ and apply some version of the Gram--Schmidt algorithm. The result will be an infinite lower triangular matrix $T$ such that $p_n =\sum_j T_{nj} x^j$. We prefer to normalize such that the diagonal elements of $T$ are equal to $1$. If our bilinear form now depends only on the product of its two arguments, the procedure simplifies as follows:

Let $\L$ be a linear functional such that the induced bilinear form $(f,g)=\L(fg)$ is nondegenerate when restricted to $K[x]_{\leq n}$, the space of polynomials of bounded degree, for all $n\geq 0$. Let $(p_n)_n$ be the associated sequence of monic orthogonal polynomials, \ie the leading term of $p_n(x)$ is $x^n$ and $(p_k,p_{n})=0$ for $k\neq n$. Then we have
\begin{theorem} \cite[Thm.~4.1]{Chihara} There are constants $c_n,\: d_n$ such that
\begin{equation*}
 p_0(x) = 1,\qquad  p_{n+1}(x) = (x-c_n)p_n(x) - d_np_{n-1}(x).
\end{equation*}
\end{theorem}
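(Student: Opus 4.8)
The plan is to exploit the single crucial property of the pairing: it factors through multiplication, i.e.\ $(f,g)=\L(fg)$, so that $(xf,g)=(f,xg)$. This self-adjointness of multiplication by $x$ is what forces the recurrence to be three-term rather than having arbitrarily many terms. First I would observe that since each $p_n$ is monic of degree $n$, the difference $p_{n+1}(x)-x\,p_n(x)$ is a polynomial of degree at most $n$, so it can be written uniquely in the basis $p_0,\ldots,p_n$ as
\begin{equation*}
 p_{n+1}(x)-x\,p_n(x) = \sum_{j=0}^{n} \lambda_{n,j}\, p_j(x)
\end{equation*}
for some constants $\lambda_{n,j}\in K$.

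The heart of the argument is to show that $\lambda_{n,j}=0$ whenever $j\leq n-2$. For this I would pair both sides with $p_j$ and use orthogonality together with the self-adjointness $(x\,p_n,p_j)=(p_n,x\,p_j)$. The left-hand side gives $(p_{n+1},p_j)-(p_n,x\,p_j)$; the first term vanishes for $j\leq n$ by orthogonality, and since $x\,p_j$ has degree $j+1\leq n-1<n$, it lies in $K[x]_{\leq n-1}$, which is orthogonal to $p_n$, so the second term vanishes too. On the right-hand side, orthogonality isolates $\lambda_{n,j}(p_j,p_j)$. Because the form is nondegenerate on each $K[x]_{\leq n}$, the diagonal entry $(p_j,p_j)$ is nonzero, whence $\lambda_{n,j}=0$ for all $j\leq n-2$. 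This leaves only the two terms $j=n$ and $j=n-1$, so setting $c_n\coloneqq -\lambda_{n,n}$ and $d_n\coloneqq -\lambda_{n,n-1}$ yields exactly the claimed form $p_{n+1}=(x-c_n)p_n-d_np_{n-1}$, with the initial condition $p_0=1$ being immediate from monicity in degree $0$.

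The one subtle point I expect to be the main obstacle is the nondegeneracy bookkeeping needed to guarantee $(p_j,p_j)\neq 0$, which in turn is what licenses dividing out to conclude $\lambda_{n,j}=0$. The hypothesis that the form is nondegenerate on every truncation $K[x]_{\leq n}$ is precisely designed to handle this: it ensures the monic orthogonal sequence $(p_n)$ exists and is unique, and that each $p_n$ has nonzero norm (otherwise $p_n$ would be orthogonal to all of $K[x]_{\leq n}$, contradicting nondegeneracy). I would state this existence-and-nonvanishing fact explicitly at the outset, either by a short inductive construction of the $p_n$ via Gram--Schmidt or by citing the standard theory, since everything downstream rests on it. No genuine computation is required beyond this; the result is purely a consequence of self-adjointness of $x$ plus orthogonality.
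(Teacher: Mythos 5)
Your proof is correct: the paper itself gives no argument for this statement (it is quoted from Chihara, Thm.~4.1), and what you wrote is essentially the standard proof found in that reference --- expand $p_{n+1}-x\,p_n$ in the orthogonal basis, kill the coefficients $\lambda_{n,j}$ for $j\leq n-2$ using the self-adjointness $(xf,g)=(f,xg)$ of multiplication by $x$, and use nondegeneracy on each truncation $K[x]_{\leq n}$ to ensure $(p_j,p_j)\neq 0$. Nothing is missing; in particular you correctly isolated the one genuinely needed hypothesis, namely that nondegeneracy forces every $p_j$ to have nonzero norm.
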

But also the converse is true:
\begin{theorem}[Favard's theorem] \cite[Thm.~4.4]{Chihara} \label{favard}
Let $(p_n)_n$ be a sequence of polynomials, such that $\deg p_n =n$ and the following three-term recurrence holds:
$$p_0(x) = 1,\qquad  p_{n+1}(x) = (x-c_n)p_n(x) - d_np_{n-1}(x).
$$
Then there exists a unique linear functional $\L$ such that $\L(1)=1$ and $\L(p_kp_{n})=0$ for $k\neq n$. 
\end{theorem}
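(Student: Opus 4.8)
The plan is to pass from the orthogonality requirement to a statement about the moments $\mu_s:=\L(x^s)$ and to construct these moments one at a time; the only genuine issue is that the resulting linear system is over-determined, and the three-term recurrence is exactly what makes it consistent. First I would reformulate the orthogonality. Since $\deg p_n=n$, the polynomials $p_0,\ldots,p_{n-1}$ and the monomials $1,x,\ldots,x^{n-1}$ span the same space, so the conditions $\L(p_kp_n)=0$ for $k<n$ (the case $k>n$ being symmetric) are equivalent to the family $\L(x^jp_n)=0$ for $0\le j\le n-1$. Writing $p_n=\sum_{i=0}^n a_{n,i}x^i$ with $a_{n,n}=1$, the condition indexed by $(n,j)$ reads $\sum_{i=0}^n a_{n,i}\mu_{i+j}=0$; its highest moment is $\mu_{n+j}$, occurring with coefficient $a_{n,n}=1$. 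Thus, read from the top, the equation $(n,j)$ expresses $\mu_{n+j}$ through strictly lower moments.

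Next I would construct $\L$ and dispose of uniqueness. Fix $\mu_0=\L(1)=1$ and define $\mu_1,\mu_2,\ldots$ recursively: at stage $s\ge 1$ there is always the equation coming from $(n,j)=(s,0)$, namely $\L(p_s)=0$, and I use it to define $\mu_s$ in terms of $\mu_0,\ldots,\mu_{s-1}$. This produces a unique sequence of moments, hence a unique functional $\L$. Uniqueness in the theorem is then automatic: any admissible $\L$ satisfies $\L(1)=1$ and $\L(p_s)=\L(p_0p_s)=0$ for all $s\ge 1$, and these already force every moment.

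The heart of the matter is to check that the $\L$ so built satisfies \emph{all} orthogonality conditions, not just the ones $(s,0)$ used in its definition. For a fixed total degree $s$ there may be several pairs $(n,j)$ with $n+j=s$ and $0\le j\le n-1$, all purporting to fix the single moment $\mu_s$, and I must show they agree. The mechanism is the recurrence in the form $xp_n=p_{n+1}+c_np_n+d_np_{n-1}$; applying $\L(x^{j-1}\,\cdot\,)$ gives
\[
\L(x^jp_n) = \L(x^{j-1}p_{n+1}) + c_n\,\L(x^{j-1}p_n) + d_n\,\L(x^{j-1}p_{n-1}).
\]
The last two terms are orthogonality relations of strictly smaller degree (their index pairs $(n,j-1)$ and $(n-1,j-1)$ satisfy the range constraints, and their top moments are $\mu_{s-1}$, $\mu_{s-2}$), hence already known to vanish by induction on $s$. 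Therefore $\L(x^jp_n)=\L(x^{j-1}p_{n+1})$ modulo relations already established, i.e. the shift $(n,j)\mapsto(n+1,j-1)$ preserves the content of the equation. Iterating the shift $j$ times carries any $(n,j)$ with $n+j=s$ to $(s,0)$, so every degree-$s$ equation is equivalent, modulo lower-degree relations, to the one defining $\mu_s$. This closes the induction and yields full orthogonality.

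The one delicate step is precisely this consistency of the over-determined moment system; everything else is bookkeeping, so I expect that to be the main obstacle. It is worth emphasizing where the hypotheses enter: the reduction in the first paragraph uses $\deg p_n=n$ exactly, and the collapse of the two correction terms to lower-degree relations uses that the recurrence has only three terms — a longer recurrence would produce correction terms of the same degree $s$ and the argument would break. No positivity or nonvanishing of the $d_n$ is needed, consistent with the theorem asserting only orthogonality and not $\L(p_n^2)\neq 0$.
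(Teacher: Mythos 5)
Your proof is correct: constructing the moments $\mu_s$ from the single family of conditions $\L(p_s)=0$, then proving full orthogonality by induction on the total degree $s=n+j$, using the recurrence to shift $(n,j)\mapsto(n+1,j-1)$ modulo already-established lower-degree relations, is exactly the classical argument, and the consistency of the over-determined moment system is indeed the only real point. The paper gives no proof of its own --- it cites Chihara's Theorem~4.4 --- and your argument is essentially the proof found there, so the two approaches coincide.
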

\begin{theorem}\cite[Thm.~4.2]{Chihara} \label{generalLnorm}
Under the conditions of the above theorems, we have for $n\geq 1$: $$\L(p_n^2) = d_n\L(p_{n-1}^2).$$
\end{theorem}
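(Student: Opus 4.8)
The plan is to prove the identity as a direct consequence of the three--term recurrence together with the orthogonality $\L(p_kp_n)=0$ for $k\neq n$ that characterizes $\L$ by Favard's theorem (Theorem~\ref{favard}). The one auxiliary fact I would isolate first is that $\L(p_nq)=0$ for every polynomial $q$ of degree strictly less than $n$. This follows because $\deg p_j=j$, so $p_0,\ldots,p_{n-1}$ form a basis of the space of polynomials of degree $\leq n-1$; writing such a $q$ as a linear combination of them and applying $\L$ term by term, every summand $\L(p_np_j)$ with $j<n$ vanishes by orthogonality.

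The main step is to isolate the coefficient $d_n$ from the recurrence. Rewriting $p_{n+1}=(x-c_n)p_n-d_np_{n-1}$ as $d_np_{n-1}=(x-c_n)p_n-p_{n+1}$, multiplying by $p_{n-1}$ and applying $\L$ gives
\[
d_n\,\L(p_{n-1}^2)=\L(x\,p_np_{n-1})-c_n\L(p_np_{n-1})-\L(p_{n+1}p_{n-1}).
\]
For $n\geq 1$ the indices $n,n-1$ and $n+1,n-1$ are distinct, so both $\L(p_np_{n-1})$ and $\L(p_{n+1}p_{n-1})$ vanish by orthogonality, leaving $d_n\,\L(p_{n-1}^2)=\L\!\left(p_n\cdot x\,p_{n-1}\right)$.

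To finish, I would identify $\L(p_n\cdot x\,p_{n-1})$ with $\L(p_n^2)$ using the monic normalization. Since each $p_j$ is monic of degree $j$, the product $x\,p_{n-1}$ is monic of degree $n$, hence $x\,p_{n-1}=p_n+r$ with $\deg r<n$. By the auxiliary fact $\L(p_nr)=0$, so $\L(p_n\cdot x\,p_{n-1})=\L(p_n^2)$, which combined with the previous line yields exactly $\L(p_n^2)=d_n\,\L(p_{n-1}^2)$.

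There is essentially no serious obstacle here: the argument is short once the orthogonality-to-lower-degrees lemma is in place, and it is really the monic leading coefficient that forces the precise constant $d_n$ to appear rather than some other multiple. The only points I would double-check are the index bookkeeping at the base case $n=1$ (where the computation uses $p_2,p_1,p_0$ and $x\,p_0=p_1+c_0$, so no convention for $p_{-1}$ is even needed) and the observation that, although nondegeneracy of $\L$ on each $K[x]_{\leq n}$ is what guarantees the monic orthogonal sequence and the constants $c_n,d_n$ exist, the asserted identity is a formal consequence of the recurrence and does not require dividing by $\L(p_{n-1}^2)$.
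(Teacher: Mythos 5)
Your proof is correct and complete: the auxiliary fact that $\L(p_nq)=0$ for $\deg q<n$, the use of the recurrence at index $n$ paired against $p_{n-1}$, and the monic-normalization step $x\,p_{n-1}=p_n+r$ together give exactly $\L(p_n^2)=d_n\L(p_{n-1}^2)$, with the base case $n=1$ handled correctly. Note that the paper does not prove this statement at all --- it cites it as \cite[Thm.~4.2]{Chihara} --- and your argument is essentially the standard one found there, so there is nothing in the paper's treatment that your proposal misses.
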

\begin{remark} \label{finiteFarvard}Since we shall deal with finite polynomial families, we need a little modification of Favard's theorem:
If $(p_n)_{n\leq N}$ is a finite sequence that satisfies a three-term recurrence as above, then we can always extend it to an infinite sequence by choosing arbitrary constants $c_n$, $d_n$ for $n\geq N$. But for every such extension, the resulting functional $\L$ from Favard's theorem will satisfy $\L(1) =1$ and $\L(p_n)=\L(p_np_0)=0$ for $n\geq 1$. So $\L$ will always be uniquely determined on $K[x]_{\leq N}$, the space of degree-bounded polynomials.
\end{remark}

\subsection{A polynomial family} 
Our construction of homogeneous polynomials orthogonal on the sphere is formally similar to the definition of spherical harmonics, see \cite[p.~35]{Dunkl}. Those are defined by recursion over the number of variables, as products of Chebychev and Gegenbauer polynomials. Inspired by that procedure, we introduce the following polynomial family in lieu thereof:
\begin{definition} Let $n,\ m$ be integers with $0\leq 2n\leq m+1$, a condition that we always will assume silently. We define monic polynomials $\p{n}{m}$ of degree $n$ with rational coefficients:
\begin{equation*}
\p{n}{m}(x) \coloneqq \sum_{\substack{j=0\\ n-j\ \text{even}}}^n (-1)^{\frac{n-j}{2}} \frac{n!\,(m-2n)!!}{j!\,(m-n-j)!!\,(n-j)!!}\:x^j.
\end{equation*}
\end{definition}
\begin{remark}
As Yuan Xu pointed out to the author, this can be written in terms of the hypergeometric function, namely $\p{n}{m}(x) = x^n\vphantom{}_2F_1\!\left( \genfrac{}{}{0pt}{}{-\frac{n}{2},\,\frac{1-n}{2}}{\frac{m}{2}-n+1} ; -\frac{1}{x^2}\right)$. To see this, first change summation from $j$ to $n-j$, so that the sum is over $j\! =$ even, then set $j\! =\!2i$ and rewrite the sum in the notation of the rising Pochhammer symbol $(a)_n = a (a\!+\!1)\ldots(a\!+\!n\!-\!1)$. Comparing this formula with \cite[Prop.~1.4.11]{Dunkl}, it follows that the $\p{n}{m}$ are in fact a variant of the Gegenbauer polynomials $C_n^\lambda$. More precisely, $\p{n}{m}(x)$ is a multiple of $C^{-\frac{m}{2}}_n\!\!\left(\sqrt{-1}\,x\right)$, where the factor is chosen such that the polynomial becomes monic.
\end{remark}

\begin{lemma} \label{trigonometric}
For $n\geq 1$, we have a trigonometric differential relation:
\begin{equation*}
\frac{d}{d\omega} \Big[\p{n-1}{m-2}\big(\tan(\omega)\big)\cos(\omega)^{m-1} \Big]= (n-m)\, \p{n}{m}\big(\tan(\omega)\big)\cos(\omega)^{m-1}.
\end{equation*} 
\end{lemma}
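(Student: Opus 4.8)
The plan is to reduce the trigonometric statement to a purely algebraic identity between polynomials, and then to establish that identity by comparing coefficients. First I would set $x=\tan(\omega)$ and differentiate the bracket by the chain and product rules, using $\frac{d}{d\omega}\tan(\omega)=\cos(\omega)^{-2}$ and $\frac{d}{d\omega}\cos(\omega)^{m-1}=-(m-1)\sin(\omega)\cos(\omega)^{m-2}$. Writing $P\coloneqq\p{n-1}{m-2}$, the derivative is the sum of $P'(\tan(\omega))\cos(\omega)^{m-3}$ and $-(m-1)\sin(\omega)\cos(\omega)^{m-2}P(\tan(\omega))$. Pulling out the common factor $\cos(\omega)^{m-1}$ and using $\cos(\omega)^{-2}=1+x^2$ together with $\sin(\omega)/\cos(\omega)=x$, the left-hand side becomes $\cos(\omega)^{m-1}\big[(1+x^2)P'(x)-(m-1)xP(x)\big]$. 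Since the right-hand side of the lemma equals $(n-m)\,\p{n}{m}(x)\cos(\omega)^{m-1}$ and $\tan$ is a diffeomorphism of $(-\tfrac{\pi}{2},\tfrac{\pi}{2})$, dividing by $\cos(\omega)^{m-1}$ shows the lemma is equivalent to the polynomial identity
\[
(1+x^2)\,\frac{d}{dx}\p{n-1}{m-2}(x)-(m-1)\,x\,\p{n-1}{m-2}(x)=(n-m)\,\p{n}{m}(x).
\]

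To prove this identity I would compare the coefficient of $x^\ell$ on each side using the closed form for the coefficients; write $a^{(n,m)}_j$ for the coefficient of $x^j$ in $\p{n}{m}$. A short computation gives the $x^\ell$-coefficient of the left-hand side as $(\ell+1)\,a^{(n-1,m-2)}_{\ell+1}+(\ell-m)\,a^{(n-1,m-2)}_{\ell-1}$, with the convention that coefficients of out-of-range index vanish, while the right-hand side contributes $(n-m)\,a^{(n,m)}_\ell$. Note that the parities are compatible, since $\p{n-1}{m-2}$ has the opposite parity to $\p{n}{m}$, so the indices $\ell\pm1$ are precisely those carried by $\p{n-1}{m-2}$. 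Substituting the explicit formula, all three coefficients are rational multiples of the common quantity $C\coloneqq(-1)^{(n-\ell)/2}\frac{(n-1)!\,(m-2n)!!}{\ell!\,(m-n-\ell)!!\,(n-\ell)!!}$; using the recursions $(\ell+1)!=(\ell+1)\,\ell!$, $(m-n-\ell)!!=(m-n-\ell)(m-n-\ell-2)!!$ and $(n-\ell)!!=(n-\ell)(n-\ell-2)!!$, one finds $(\ell+1)a^{(n-1,m-2)}_{\ell+1}=-(m-n-\ell)(n-\ell)\,C$, $(\ell-m)a^{(n-1,m-2)}_{\ell-1}=\ell(\ell-m)\,C$, and $(n-m)a^{(n,m)}_\ell=n(n-m)\,C$. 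The whole identity therefore collapses to the scalar relation $-(m-n-\ell)(n-\ell)+\ell(\ell-m)=n(n-m)$, which is verified by expanding both sides.

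I expect the genuine work to be bookkeeping rather than anything conceptual: tracking the sign $(-1)^{(n-j)/2}$ and the double factorials, and confirming that the degenerate cases are already subsumed by the general formula. For $\ell=n$ the factor $(n-\ell)$ automatically annihilates the out-of-range term $a^{(n-1,m-2)}_{n+1}$, and both sides reduce correctly to the monic leading coefficient $n-m$; for $\ell=0$ the factor $\ell$ appearing in $a^{(n-1,m-2)}_{\ell-1}=\ell\,C$ suppresses the phantom index $-1$. I would also check that the double-factorial splittings remain within the range of validity $\geq-1$: the standing hypothesis $2n\leq m+1$ guarantees $m-n-\ell\geq1$ for every contributing term, so no ill-defined symbol $(-2)!!$ occurs. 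Once these boundary checks are in place, the coefficient comparison completes the proof. Alternatively, one could deduce the identity from a known derivative relation for Gegenbauer polynomials via the correspondence noted after the definition, but the direct coefficient comparison is self-contained and avoids translating normalizations.
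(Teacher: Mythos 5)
Your proposal is correct and follows essentially the same route as the paper: the paper differentiates the expansion of $\p{n-1}{m-2}\big(\tan(\omega)\big)\cos(\omega)^{m-1}$ in the monomials $\sin(\omega)^j\cos(\omega)^{m-1-j}$, shifts indices and collects terms, which --- after factoring out $\cos(\omega)^{m-1}$ --- is precisely your coefficient comparison for $(1+x^2)P'(x)-(m-1)\,x\,P(x)=(n-m)\,\p{n}{m}(x)$, and both computations terminate in the identical scalar identity $(j-n)(m-n-j)-j(m-j)=n(n-m)$ (your $-(m-n-\ell)(n-\ell)+\ell(\ell-m)=n(n-m)$). Your boundary checks (out-of-range coefficients killed by the factors $(n-\ell)$ and $\ell$, and validity of the double-factorial splittings under $2n\le m+1$) are sound, so the argument is complete.
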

\begin{proof} This is straightforward.
Firstly, we calculate $ \frac{d}{d\omega}\! \left[\sin(\omega)^j \cos(\omega)^{m-j-1}\right] = j \sin(\omega)^{j-1} \cos(\omega)^{m-j}-(m\!-\!j\!-\!1)\sin(\omega)^{j+1} \cos(\omega)^{m-j-2}$, and so
\begin{align*}
\frac{d}{d\omega} &\Big[\p{n-1}{m-2}\big(\tan(\omega)\big)\cos(\omega)^{m-1} \Big] 
\\&=\!\! \textstyle \sum\limits_{\substack{j=0\\ n-j\ \text{odd}}}^{n-1} \!\!\!\frac{(-1)^{\frac{n-j-1}{2}}  (n-1)!\,(m-2n)!!}{j!\,(m-n-j-1)!!\,(n-j-1)!!}\frac{d}{d\omega}\! \left[\sin(\omega)^j \cos(\omega)^{m-j-1}\right]
\\ &=\!\!\textstyle\sum\limits_{\substack{j=0\\ n-j\ \text{even}}}^{n-2} \!\!\!\frac{(-1)^{\frac{n-j-2}{2}}(n-1)!\,(m-2n)!!}{j!\,(m-n-j-2)!!\,(n-j-2)!!}\sin(\omega)^{j} \cos(\omega)^{m-j-1} \\[-3mm]
&\hspace{3cm}-\textstyle\sum\limits_{\substack{j=1\\ n-j\ \text{even}}}^{n}\!\!\! \frac{(-1)^{\frac{n-j}{2}}(n-1)!\,(m-2n)!!\,(m-j)}{(j-1)!\,(m-n-j)!!\,(n-j)!!}\sin(\omega)^{j} \cos(\omega)^{m-j-1} \\
&=\!\!\textstyle\sum\limits_{\substack{j=0\\ n-j\ \text{even}}}^{n} \!\!\!\frac{(-1)^{\frac{n-j}{2}}(n-1)!\,(m-2n)!!}{j!\,(m-n-j)!!\,(n-j)!!}\underbrace{\big[(j\!-\!n)(m\!-\!n\!-\!j)-j(m\!-\!j)\big] }_{=n(n-m)}\tan(\omega)^{j} \cos(\omega)^{m-1}
\\ &= \textstyle(n-m)\, \p{n}{m}\big(\tan(\omega)\big)\cos(\omega)^{m-1}. \qedhere
\end{align*} 
\end{proof}

Our next goal is to show that the $\p{n}{m}$, for fixed $m$, form a set of orthogonal polynomials in the sense of the above subsection. In order to apply Favard's theorem, we claim:
\begin{proposition} \label{threeterm} For $0\leq 2n\leq m-1$, we have a three-term recurrence:
\begin{equation*}
\p{0}{m}(x) = 1,\qquad \p{1}{m}(x) = x, \qquad \p{n+1}{m}(x) = x\p{n}{m}(x) -d_n^m \p{n-1}{m}(x),
\end{equation*} 
where $d_n^m\coloneqq \frac{n(m-n+1)}{(m-2n)(m-2n+2)}$.
\end{proposition}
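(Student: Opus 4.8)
The plan is to prove the recurrence by comparing coefficients of each power $x^j$ on both sides. Write $\p{n}{m}(x)=\sum_j a_{n,j}^m\, x^j$, where
\[
a_{n,j}^m = (-1)^{\frac{n-j}{2}}\,\frac{n!\,(m-2n)!!}{j!\,(m-n-j)!!\,(n-j)!!}
\]
when $n-j$ is a nonnegative even integer and $a_{n,j}^m=0$ otherwise. The base cases $\p{0}{m}=1$ and $\p{1}{m}=x$ are immediate from the definition, so it remains to establish, for each $j$, the scalar identity
\[
a_{n+1,j}^m = a_{n,j-1}^m - d_n^m\, a_{n-1,j}^m,
\]
which is exactly the coefficient of $x^j$ in the claimed recurrence (note that $a_{n,j-1}^m$ is the coefficient of $x^j$ in $x\,\p{n}{m}$).

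First I would check that the three terms are simultaneously nonzero precisely when $j\equiv n+1 \pmod 2$, so the parities match automatically. Writing $n+1-j=2s$, the signs of $a_{n+1,j}^m$, $a_{n,j-1}^m$ and $a_{n-1,j}^m$ come out as $(-1)^s$, $(-1)^s$ and $(-1)^{s-1}$; the explicit minus sign in front of $d_n^m$ therefore makes all three contributions carry the common sign $(-1)^s$ (recall $d_n^m>0$ under the hypothesis $2n\le m-1$). The leading coefficient $j=n+1$ must be treated on its own, since there $a_{n-1,n+1}^m=0$ and the identity reduces to the statement that both $\p{n+1}{m}$ and $x\,\p{n}{m}$ are monic.

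For the remaining range $0\le j\le n-1$ (with $j\equiv n+1\pmod 2$) all double-factorial arguments stay $\ge -1$ and all denominators stay positive, thanks to $m-n-j+1\ge m-2n+2\ge 3$; this is where the domain condition $0\le 2n\le m-1$ does its work and is the only place one has to be careful. Here I would divide all three terms by the common factor $\frac{(n-1)!\,(m-2n-2)!!}{j!\,(m-n-j-1)!!\,(n-1-j)!!}$, using the splittings $(m-2n)!!=(m-2n)(m-2n-2)!!$, $(m-2n+2)!!=(m-2n+2)(m-2n)(m-2n-2)!!$, $(n+1-j)!!=(n+1-j)(n-1-j)!!$ and $(m-n-j+1)!!=(m-n-j+1)(m-n-j-1)!!$, together with the value of $d_n^m$. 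After this reduction, and after cancelling the common factor $n$, the whole claim collapses to the single linear identity
\[
(n+1)(m-n-j+1) = (m-2n)\,j + (m-n+1)(n+1-j),
\]
both sides of which expand to $mn+m-n^2+1-nj-j$. The main obstacle is thus not conceptual but purely a matter of carefully tracking the double factorials and signs through the simplification; once that bookkeeping is done, the recurrence follows from this elementary identity.
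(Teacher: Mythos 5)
Your proposal is correct and is essentially the paper's own proof: the paper likewise expands $x\p{n}{m}(x)-d_n^m\p{n-1}{m}(x)$, collects the coefficient of each $x^j$, and reduces everything to the very same linear identity $j(m-2n)+(m-n+1)(n-j+1)=(n+1)(m-n-j+1)$ after the same double-factorial cancellations. The only difference is organizational (you compare coefficients one $j$ at a time and isolate the monic leading term, while the paper manipulates the full sums at once), which does not change the substance of the argument.
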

\begin{proof}
We start from the right: $ x\p{n}{m}(x) -d_n^m \p{n-1}{m}(x)$ gives
\begin{align*}
 &\textstyle \sum\limits_{\substack{j=1\\ n-j\ \text{odd}}}^{n+1} \!\!\!\frac{(-1)^{\frac{n-j+1}{2}} n!\,(m-2n)!!}{(j-1)!\,(m-n-j+1)!!\,(n-j+1)!!}\: x^j 
\ - \sum\limits_{\substack{j=0\\ n-j\ \text{odd}}}^{n-1} \!\!\!d_n^m\frac{(-1)^{\frac{n-j-1}{2}} (n-1)!\,(m-2n+2)!!}{j!\,(m-n-j+1)!!\,(n-j-1)!!}\: x^j 
\\=&\textstyle \sum\limits_{\substack{j=0\\ n-j\ \text{odd}}}^{n+1} \!\!\!\frac{(-1)^{\frac{n-j+1}{2}} (n+1)!\,(m-2n-2)!!}{j!\,(m-n-j-1)!!\,(n-j+1)!!}\underbrace{\textstyle
\frac{j(m-2n)+(m-n+1)(n-j+1)}{(n+1)(m-n-j+1)}}_{=1} \,x^j = \p{n+1}{m}(x) .\qedhere
\end{align*} 
\end{proof}
The next theorem gives a useful analytic form of the corresponding linear functional.
\begin{theorem} \label{pthm}We define, for $m\geq 1$, a linear functional $\L$ on the vector space of polynomials of degree less than $m$, by setting
\begin{equation}
\L: f \longmapsto \int_0^\infty\!\!\! \int_{-\infty}^\infty z^{m-1}f\left(\frac{y}{z}\right) e^{-\frac{y^2+z^2}{2}} dydz.
\end{equation}
Then the $\p{n}{m}$ form a set of orthogonal polynomials with respect to the induced bilinear form, \ie for $k\neq n,\ k\!+\! n\leq m\!-\!1$, we have $\L(\p{k}{m}\p{n}{m})=0$ and for $2n\leq m\!-\!1$:
\begin{equation} \label{Lpn2}
\L(\p{n}{m}\p{n}{m}) = 2^{\frac{3}{2}m-2n-\frac{1}{2}}  \frac{n!}{(m-n)!}
\Gamma\left(\frac{m}{2}-n\right)\Gamma\left(\frac{m}{2}-n+1\right)\Gamma\left(\frac{m+1}{2}\right).
\end{equation}
\end{theorem}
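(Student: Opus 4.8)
The plan is to first bring $\L$ into a transparent analytic shape, then prove orthogonality by an integration-by-parts recursion powered by Lemma~\ref{trigonometric}, and finally extract the norm from the recurrence coefficients of Proposition~\ref{threeterm} via Theorem~\ref{generalLnorm}. Concretely, in the defining double integral I would substitute $y=xz$ at fixed $z>0$, so that $dy=z\,dx$ and the integration in $z$ becomes a Gaussian moment evaluated by $\int_0^\infty r^s e^{-r^2/2}dr=2^{(s-1)/2}\Gamma(\frac{s+1}{2})$. This collapses $\L$ to a one-dimensional integral against the ultraspherical weight $(1+x^2)^{-(m+1)/2}$; the further substitution $x=\tan\omega$, under which $1+x^2=\cos(\omega)^{-2}$ and $dx=\cos(\omega)^{-2}d\omega$, yields
\[
\L(f)=2^{\frac{m-1}{2}}\Gamma\!\left(\tfrac{m+1}{2}\right)\int_{-\pi/2}^{\pi/2}f(\tan\omega)\,\cos(\omega)^{m-1}\,d\omega,
\]
which is precisely the weight appearing in Lemma~\ref{trigonometric}. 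Taking $f=1$ and factoring the original double integral directly gives the base case $\L(1)=\sqrt{2\pi}\cdot 2^{(m-2)/2}\Gamma(\frac m2)=2^{(m-1)/2}\sqrt\pi\,\Gamma(\frac m2)$.

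\emph{Orthogonality.} Write $I_m(k,n)=\int_{-\pi/2}^{\pi/2}\p{k}{m}(\tan\omega)\p{n}{m}(\tan\omega)\cos(\omega)^{m-1}d\omega$ and assume $k<n$. Using Lemma~\ref{trigonometric} I would replace $\p{n}{m}(\tan\omega)\cos(\omega)^{m-1}$ by $\frac{1}{n-m}\frac{d}{d\omega}\big[\p{n-1}{m-2}(\tan\omega)\cos(\omega)^{m-1}\big]$ and integrate by parts. The boundary contributions at $\omega=\pm\pi/2$ carry the factor $\cos(\omega)^{m-k-n}$, which vanishes precisely because $k+n\le m-1<m$; this strict inequality is the source of the degree restriction in the statement. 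A short computation from the explicit coefficients gives the derivative identity $(\p{k}{m})'=k\,\p{k-1}{m-2}$, so differentiating the remaining factor $\p{k}{m}(\tan\omega)$ and absorbing the accompanying $\cos(\omega)^{-2}$ produces the recursion
\[
I_m(k,n)=\frac{-k}{n-m}\,I_{m-2}(k-1,n-1).
\]
Iterating $k$ times lands on a nonzero multiple of $I_{m-2k}(0,n-k)$, and since $\p{0}{m-2k}=1$ a final application of Lemma~\ref{trigonometric} writes this integrand as an exact derivative whose boundary values again vanish (still by $k+n<m$). Hence $I_m(k,n)=0$, i.e.\ $\L(\p{k}{m}\p{n}{m})=0$; along the way one checks that the condition $2n\le m+1$ ensuring definedness of all the $\p{\cdot}{\cdot}$ is preserved under $m\mapsto m-2$.

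\emph{The norm.} Orthogonality means that $\L$, after dividing by $\L(1)$, is the functional assigned to the monic family $(\p{n}{m})_n$ by Favard's theorem in the finite form of Remark~\ref{finiteFarvard}, with recurrence coefficients $d_n^m=\frac{n(m-n+1)}{(m-2n)(m-2n+2)}$ from Proposition~\ref{threeterm}. Theorem~\ref{generalLnorm} then gives $\L(\p{n}{m}\p{n}{m})=d_n^m\,\L(\p{n-1}{m}\p{n-1}{m})$, which I would iterate down to $\L(\p{n}{m}\p{n}{m})=\big(\prod_{i=1}^n d_i^m\big)\,\L(1)$. In the product, the numerator telescopes to $n!\,\frac{m!}{(m-n)!}$, while the two factors of the denominator combine to $2^{2n}\Gamma(\frac m2+1)\Gamma(\frac m2)\big/\big(\Gamma(\frac m2-n+1)\Gamma(\frac m2-n)\big)$. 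Multiplying by $\L(1)$, cancelling $\Gamma(\frac m2)$, and rewriting $m!/\Gamma(\frac m2+1)$ with the duplication identity (\ref{doublegamma}) collapses all the gamma factors and powers of two to the stated closed form.

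\emph{Main obstacle.} The delicate step is the orthogonality recursion: one must run the integration by parts while the parameter $m$ decreases by two at each stage, verifying at every level both that the polynomials involved remain defined and that the boundary terms genuinely vanish, which hinges entirely on the strict inequality $k+n<m$. Once that recursion is in place, identifying $\L$ with the Favard functional and finishing with Theorem~\ref{generalLnorm} is routine gamma-function bookkeeping.
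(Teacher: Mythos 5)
Your proof is correct, but it reaches orthogonality by a genuinely different route than the paper. The paper never computes a single pairwise integral $\L(\p{k}{m}\p{n}{m})$: it invokes Favard's theorem together with Remark~\ref{finiteFarvard} to obtain a unique normalized functional $\L'$ orthogonalizing the family (this is where Proposition~\ref{threeterm} enters), and then reduces everything to the one computation $\L(\p{n}{m})=\L(\p{n}{m}\p{0}{m})=0$ for $n\ge 1$ --- a single application of Lemma~\ref{trigonometric} after passing to polar coordinates, with vanishing boundary terms --- which forces $\L$ to be a scalar multiple of $\L'$ and yields all orthogonality relations at once. You instead prove each relation $\L(\p{k}{m}\p{n}{m})=0$ by hand, via the integration-by-parts recursion $I_m(k,n)=\tfrac{-k}{n-m}\,I_{m-2}(k-1,n-1)$; this requires the extra derivative identity $(\p{k}{m})'=k\,\p{k-1}{m-2}$, which does not appear in the paper but is correct (it drops out of the explicit coefficients, with the parameters shifting exactly as in Lemma~\ref{trigonometric}), and your bookkeeping is sound: the cosine exponent $m-k-n$ in the boundary terms is invariant along the recursion and positive by hypothesis, the denominators $n-m+j$ stay nonzero because $k\le m-n-1$, and the constraint $2n\le m+1$ survives $m\mapsto m-2$. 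For the norms the two proofs are essentially equivalent: both rest on Theorem~\ref{generalLnorm} with the coefficients $d_n^m$ of Proposition~\ref{threeterm}; the paper verifies that the closed form (\ref{Lpn2}) satisfies the ratio recurrence, whereas you telescope $\prod_{i=1}^n d_i^m$ against $\L(1)$ and finish with (\ref{doublegamma}) --- the same gamma-function bookkeeping run in the opposite direction. The trade-off: the paper's argument is shorter and lets Favard's uniqueness carry all the weight; yours is analytically self-contained, establishing orthogonality without any appeal to uniqueness, at the cost of one extra identity and a more delicate descent in $m$.
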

\begin{proof}
Since $\p{n}{m}$ satisfy the three-term relation in Prop.~\ref{threeterm}, by Favard's theorem and Remark \ref{finiteFarvard}, there exists a unique functional $\L'$ with $\L'(1)=1$, such that the $\p{n}{m}$ form an orthogonal basis with respect to the bilinear form induced by $\L'$. We claim that $\L$ is a scalar multiple of $\L'$.
Since $(\p{n}{m})_{n} $ is a basis of the space of polynomials, we must show that, for $n\geq 1$, $\L(\p{n}{m}) = \L(\p{n}{m}\p{0}{m}) =0$.
Using polar coordinates $(y,z) = (r\cos\omega,r\sin\omega)$ and Lemma \ref{trigonometric}, we get:
\begin{align*}
\L(\p{n}{m}) &= \int_0^\pi\!\!\int_0^\infty\p{n}{m}\!\left(\tfrac{\cos\omega}{\sin\omega}\right)\sin(\omega)^{m-1} r^m e^{-\frac{r^2}{2}}dr d\omega \\
&=\int_0^\infty r^m e^{-\frac{r^2}{2}}dr \int_{\frac{\pi}{2}}^{\frac{3\pi}{2}} (-1)^n \p{n}{m}\big(\tan(\omega)\big)\cos(\omega)^{m-1}d\omega\\
&= 2^{\frac{m-1}{2}}\Gamma\left(\tfrac{m+1}{2}\right)\Big[\tfrac{(-1)^n}{n-m}p_{n-1}^{m-2}\big(\tan(\omega)\big)\cos(\omega)^{m-1} \Big]_{\frac{\pi}{2}}^{\frac{3\pi}{2}} =0,
\end{align*}
while $\L(1) = 2^{\frac{m-1}{2}}\sqrt{\pi}\,\Gamma\!\left(\frac{m}{2}\right)=2^{\frac{3m-1}{2}}  \frac{1}{m!}
\Gamma\left(\frac{m}{2}\right)\Gamma\left(\frac{m}{2}+1\right)\Gamma\left(\frac{m+1}{2}\right)$ by (\ref{monoint}) and (\ref{doublegamma}). To verify that equation (\ref{Lpn2}) holds for $n\geq 1$, too, we must show that the right hand side satisfies the recurrence from Theorem~\ref{generalLnorm}, but this is immediate:
$$ 
\frac{2^{\frac{3}{2}m\!-\!2n\!-\!\frac{1}{2}}  \frac{n!}{(m\!-\!n)!}
\Gamma\!\left(\frac{m}{2}\!-\!n\right)\Gamma\!\left(\frac{m}{2}\!-\!n\!+\!1\right)\Gamma\!\left(\frac{m+1}{2}\right)}{
2^{\frac{3}{2}m\!-\!2n\!+\!\frac{3}{2}}  \frac{(n\!-\!1)!}{(m\!-\!n\!+\!1)!}
\Gamma\!\left(\frac{m}{2}\!-\!n\!+\!1\right)\Gamma\!\left(\frac{m}{2}\!-\!n\!+\!2\right)\Gamma\!\left(\frac{m+1}{2}\right)} = \frac{n(m\!-\!n\!+\!1)}{(m\!-\!2n)(m\!-\!2n\!+\!2)} =d_n^m.
$$
\end{proof}
\begin{corollary} \label{pcor}
 $\L(x^{k}\p{n}{m}) = 0$ for $k < n$ and $\L(x^n\p{n}{m})= \L(\p{n}{m}\p{n}{m})$.
\end{corollary}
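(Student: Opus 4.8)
The plan is to exploit that each $\p{n}{m}$ is \emph{monic} of degree $n$, so that $\{\p{0}{m},\ldots,\p{k}{m}\}$ and $\{1,x,\ldots,x^k\}$ span the same space of polynomials of degree at most $k$, the change of basis being triangular with $1$'s on the diagonal. First I would express the monomial $x^k$ in terms of the $\p{j}{m}$ with $j\leq k$. Since $\p{k}{m}$ is the unique monic member of degree $k$, subtracting it from $x^k$ leaves a polynomial of degree $<k$, and downward induction on the degree yields
\[
x^k = \p{k}{m}(x) + \sum_{j=0}^{k-1} a_j\,\p{j}{m}(x)
\]
for suitable constants $a_j$.

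Next I would apply $\L$ and combine linearity with the orthogonality established in Theorem~\ref{pthm}. For $k<n$ every index $j$ occurring in the expansion satisfies $j\leq k<n$, hence $j\neq n$ and $\L(\p{j}{m}\p{n}{m})=0$, so that $\L(x^k\p{n}{m})=0$. For $k=n$ the expansion reads $x^n=\p{n}{m}+\sum_{j<n}a_j\,\p{j}{m}$; the cross terms vanish by the same orthogonality, leaving only $\L(x^n\p{n}{m})=\L(\p{n}{m}\p{n}{m})$.

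There is no genuine obstacle here: the statement is a formal consequence of monicity together with orthogonality. The only point requiring attention is the degree range in which Theorem~\ref{pthm} guarantees $\L(\p{j}{m}\p{n}{m})=0$, namely $j+n\leq m-1$. The largest index that appears is $j=k\leq n$, so the worst case is $j+n\leq 2n-1$, which is $\leq m-1$ under the standing hypothesis $2n\leq m-1$ (the same bound under which the norm $\L(\p{n}{m}\p{n}{m})$ on the right-hand side is defined). Hence both identities hold throughout the claimed range.
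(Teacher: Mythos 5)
Your proof is correct, but it follows a genuinely different route from the paper's. You expand $x^k$ in the basis $\{\p{j}{m}\}_{j\leq k}$ (legitimate since each $\p{j}{m}$ is monic of degree $j$, so the change of basis is triangular with unit diagonal) and then invoke the orthogonality $\L(\p{j}{m}\p{n}{m})=0$ from Theorem~\ref{pthm} to kill every term except, when $k=n$, the single surviving term $\L(\p{n}{m}\p{n}{m})$. The paper instead argues by induction on $k$: the three-term recurrence of Proposition~\ref{threeterm} gives $\L(x^k\p{n}{m})=\L(x^{k-1}\p{n+1}{m})+d_n^m\,\L(x^{k-1}\p{n-1}{m})$, with base case $\L(\p{n}{m})=0$ from Theorem~\ref{pthm}; and for the second identity it shows that $\L(x^n\p{n}{m})$ and $\L(\p{n}{m}\p{n}{m})$ satisfy the same first-order recurrence in $n$, the latter via Theorem~\ref{generalLnorm}, hence coincide. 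Your argument is the standard one for monic orthogonal families: it needs only monicity plus pairwise orthogonality, bypasses the explicit coefficients $d_n^m$ and Theorem~\ref{generalLnorm} altogether, and yields both assertions in one step; the paper's version stays inside its recurrence toolkit, in keeping with how $\L$ was pinned down through Favard's theorem. One small sharpening of your range check: for the first identity the requirement is only $k+n\leq m-1$ (so that $x^k\p{n}{m}$ has degree $<m$ and $\L$ applies), under which your needed bound $j+n\leq k+n\leq m-1$ holds automatically; the stronger hypothesis $2n\leq m-1$ that you impose is needed only for the norm identity, so your expansion argument in fact covers the full range where each statement makes sense.
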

\begin{proof} By the theorem, we have $\L(x^{0}\p{n}{m}) = 0$ for $n>0$, so the case $k=0$ holds true.
Now the three-term recurrence from Proposition \ref{threeterm} allows us to inductively conclude that $\L(x^k\p{n}{m}) = \L(x^{k-1}\p{n+1}{m})+d_n^m\L(x^{k-1}\p{n-1}{m}) =0$.
The second assertion, $\L(x^n\p{n}{m})= \L(\p{n}{m}\p{n}{m})$ is trivial in the case $n\leq 1$. For $n\geq 1$, the three-term recurrence yields now $\L(x^n\p{n}{m}) = \L(x^{n-1}\p{n+1}{m})+d_n^m\L(x^{n-1}\p{n-1}{m})=d_n^m\L(x^{n-1}\p{n-1}{m})$, so $\L(x^n\p{n}{m})$ and $\L(\p{n}{m}\p{n}{m})$ (by Theorem \ref{generalLnorm}) satisfy the same recurrence relation and therefore must be equal.
\end{proof}

\subsection{Homogeneous orthogonal polynomials} \label{hsection}
We are now ready to give the desired basis $(h_\alpha)_\alpha$ of homogeneous polynomials that are orthogonal on the sphere. 
\begin{definition} \label{hdef}
For multi-indices $\alpha=(\alpha_0,\ldots,\alpha_d)$ we recursively define homogeneous polynomials $h_\alpha$ of degree $|\alpha |$ by $h_{(\alpha_0)}(x) \coloneqq x_0^{\alpha_0}$ and, for $d\geq 1$,
$$
h_\alpha(x) \coloneqq \p{\alpha_d}{2|\alpha |+d}\left(\frac{x_d}{r}\right) r^{\alpha_d}h_{\alpha'}(x'),
$$
where we have set $r=\sqrt{x_0^2 +\ldots+x_{d-1}^2}=\|x'\|$. Note that the definition of $\p{n}{m}$ implies that $\p{n}{m}(\frac{1}{y})y^n$ is an even polynomial, so all square roots vanish.
\end{definition}
\begin{theorem} Let $\bra\ ,\;\ket$ be defined as in (\ref{spherebracket}).
For all multi-indices $\alpha,\;\beta$ of length $\len(\alpha)=\len(\beta)=d+1$ and degree $|\alpha|=|\beta|$ we have:
\begin{align} \label{hth1}
\bra h_\alpha ,h_\alpha \ket &= \alpha_d!\,\frac{\left(2|\alpha '|\!+\!d\right)!!\,\left(2|\alpha |\!+\!d\!-\!1\right)!!}{\left(|\alpha '|+|\alpha |+d\right)!}\bra h_{\alpha '},h_{\alpha '}\ket ,\\
\label{hth2}
\bra h_\alpha,h_\beta \ket &= 0 \quad\text{for }\alpha\neq\beta,\\
\label{hth3}
\bra x^\alpha, h_\alpha \ket &= \bra h_\alpha ,h_\alpha \ket ,\\
\label{hth4}
\bra x^\alpha,h_\beta \ket &= 0 \quad\text{for }\alpha < \beta\ \text{(see Def.~\ref{multiindex})}.
\end{align}
\end{theorem}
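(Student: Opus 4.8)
The plan is to prove the four identities (\ref{hth1})--(\ref{hth4}) simultaneously by induction on the number of variables $d+1$, i.e.\ on $d$. The base case $d=0$ is immediate: here $\len(\alpha)=1$, so $h_{(\alpha_0)}=x_0^{\alpha_0}=x^\alpha$, identity (\ref{hth3}) is a tautology, and (\ref{hth2}), (\ref{hth4}) are vacuous because two one-entry multi-indices of equal degree must coincide. Identity (\ref{hth1}) is not needed at $d=0$ (it is a recursion feeding on the length-$d$ data); one only records the Gaussian moment $\bra x_0^{\alpha_0},x_0^{\alpha_0}\ket=(2\alpha_0-1)!!$ as the anchor.

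The engine of the inductive step is a single factorization. Writing $x=(x',x_d)$ and passing to polar coordinates $x'=r\omega'$ on $\R^d$ (so $dx'=r^{d-1}dr\,d\omega'$, $r=\|x'\|$), I insert the recursive definition of $h_\alpha$, $h_\beta$ and pull out every power of $r$ using the homogeneity $h_{\gamma'}(r\omega')=r^{|\gamma'|}h_{\gamma'}(\omega')$. Since the hypothesis $|\alpha|=|\beta|=:K$ forces both polynomials to carry the \emph{same} superscript $M:=2K+d$, the accumulated power is exactly $r^{M-1}$, and the $(d+1)$-dimensional Gaussian integral splits as
\begin{equation*}
\bra h_\alpha,h_\beta\ket=(2\pi)^{-\frac{d+1}{2}}\Big(\int_{\S^{d-1}}h_{\alpha'}(\omega')h_{\beta'}(\omega')\,d\omega'\Big)\,\L\big(\p{\alpha_d}{M}\p{\beta_d}{M}\big),
\end{equation*}
where the $(r,x_d)$-integral is recognized verbatim, with $z=r$ and $y=x_d$, as the functional $\L=\L_M$ of Theorem~\ref{pthm}. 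The analogous computation with $x^\alpha$ in the first slot yields the factor $\L(x^{\alpha_d}\p{\beta_d}{M})$ times the sphere integral $\int_{\S^{d-1}}(\omega')^{\alpha'}h_{\beta'}(\omega')d\omega'$. I then convert every sphere integral back into a $d$-variable instance of $\bra\ ,\ \ket$ by applying Lemma~\ref{homosphere} in $\R^d$: the integrand is homogeneous of degree $|\alpha'|+|\beta'|$, so $\int_{\S^{d-1}}f\,d\omega'=\text{const}\cdot\bra\,\cdot\,,\,\cdot\,\ket$ over $\R^d$, which is precisely where the inductive hypotheses enter.

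With the factorization in hand the four statements fall out by feeding in the one-variable results. For (\ref{hth2}): if $\alpha_d\neq\beta_d$ the $\L$-factor vanishes by orthogonality of the $\p{n}{M}$ (Theorem~\ref{pthm}, valid since $\alpha_d+\beta_d\le 2K\le M-1$ for $d\ge1$); if $\alpha_d=\beta_d$ but $\alpha\neq\beta$ then $\alpha'\neq\beta'$ and the sphere factor vanishes by the inductive (\ref{hth2}). Identity (\ref{hth4}) is the same argument with $x^\alpha$ in place of $h_\alpha$: for $\alpha_d<\beta_d$ the factor $\L(x^{\alpha_d}\p{\beta_d}{M})=0$ by Corollary~\ref{pcor}, and for $\alpha_d=\beta_d$, $\alpha'<\beta'$ the sphere factor vanishes by the inductive (\ref{hth4}). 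For (\ref{hth3}), Corollary~\ref{pcor} gives $\L(x^{\alpha_d}\p{\alpha_d}{M})=\L(\p{\alpha_d}{M}\p{\alpha_d}{M})$, so $\bra x^\alpha,h_\alpha\ket$ and $\bra h_\alpha,h_\alpha\ket$ differ only through their sphere factors $\bra x^{\alpha'},h_{\alpha'}\ket$ versus $\bra h_{\alpha'},h_{\alpha'}\ket$, which coincide by the inductive (\ref{hth3}).

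The one laborious point, and the main obstacle, is extracting the explicit constant in (\ref{hth1}). Here $\alpha=\beta$, and one must combine three normalizations: the factor $(2\pi)^{-\frac{d+1}{2}}$, the Lemma~\ref{homosphere} constant $2^{\frac{K'+d-2}{2}}\Gamma(\frac{K'+d}{2})$ with $K'=2|\alpha'|$ relating the $\S^{d-1}$-integral to $\bra h_{\alpha'},h_{\alpha'}\ket$, and the closed form (\ref{Lpn2}) of $\L(\p{\alpha_d}{M}\p{\alpha_d}{M})$. After cancellation, the powers of $2$ and the $\Gamma$-values must collapse --- via (\ref{ffgamma}), (\ref{doublegamma}) and the identities $|\alpha'|+\tfrac{d}{2}=\tfrac{M}{2}-\alpha_d$ and $|\alpha'|+|\alpha|+d=M-\alpha_d$ --- into exactly $\alpha_d!\,\frac{(2|\alpha'|+d)!!\,(2|\alpha|+d-1)!!}{(|\alpha'|+|\alpha|+d)!}$. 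This is a routine but delicate double-factorial and gamma-function computation (one can cross-check it in the case $d=1$, where it reproduces $\theta_{1,k}=(k!)^{k+1}$); all the conceptual content lies in the factorization step above.
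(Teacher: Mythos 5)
Your proposal is correct and follows essentially the same route as the paper's own proof: polar coordinates in the first $d$ variables factor $\bra h_\alpha,h_\beta\ket$ into the functional $\L$ of Theorem~\ref{pthm} applied to $\p{\alpha_d}{2|\alpha|+d}\p{\beta_d}{2|\alpha|+d}$ times a sphere integral in $d$ variables, which Lemma~\ref{homosphere} converts back into the $d$-variable form so that Theorem~\ref{pthm}, Corollary~\ref{pcor} and the norm formula (\ref{Lpn2}) yield (\ref{hth1})--(\ref{hth4}). The only difference is presentational: you make the induction on $d$ and the treatment of all four identities via one factorization explicit, where the paper computes (\ref{hth1}) in full and disposes of (\ref{hth2})--(\ref{hth4}) by appealing to the same calculation.
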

\begin{remark}
This means that the $h_\alpha(x),\ |\alpha|=k$ form an orthogonal basis of $\R[x_0,\ldots,x_d]_k$ that comes from a Gram--Schmidt process applied to the monomials (in lexicographic order). Indeed, equations (\ref{hth1}) and (\ref{hth2}) imply that the $h_\alpha(x)$ are orthogonal, while equations (\ref{hth3}) and (\ref{hth4}) say that the transition matrix $T^{-1}$, defined by $ x^\alpha = \sum_\beta T^{-1}_{\alpha\beta}h_\beta,\ T^{-1}_{\alpha\beta} \coloneqq \frac{\bra x^\alpha,h_\beta\ket}{\bra h_\beta,h_\beta\ket} $ is lower triangular with all diagonal elements equal to $1$.
\end{remark}
\begin{proof}
We begin with equation (\ref{hth1}). The term $\bra h_{\alpha '},h_{\alpha '}\ket$ on the right hand side means of course the form in $d$ instead of $d\!+\!1$ variables. We use polar coordinates on $\R^d$ to compute:
\begin{align*}
 & \sqrt{2\pi}^{d+1}\!\bra h_\alpha ,h_\alpha \ket = \int_{\R^{d+1}}\left[\p{\alpha_d}{2|\alpha|+d}\left(\frac{x_d}{r}\right) r^{\alpha_d}h_{\alpha'}(x)\right]^2 e^{-\frac{1}{2}\|x\|^2} dx \\
 = & \underbrace{ \int_0^\infty\!\!\int_{\R} \left[\p{\alpha_d}{2|\alpha|+d}\left(\frac{x_d}{r}\right)\right]^2 r^{2|\alpha'|+2\alpha_d+d-1} e^{-\frac{r^2+x_d^2}{2} }dx_ddr}_{= \L\left(\left(\p{\alpha_d}{2|\alpha|+d}\right)^2\right)}
 \int_{\S^{d-1}}\left[h_{\alpha'}(\omega)\right]^2 d\omega \\
\myeq{(\ref{Lpn2})}& \  \frac{\alpha_d!\, 2^{2|\alpha '|+|\alpha|+\frac{3}{2}d-\frac{1}{2}} }{\left(|\alpha|+|\alpha'|+d\right)!} { \textstyle
\Gamma\!\left( |\alpha'| \!+\!\frac{d}{2}\right)\Gamma\!\left( |\alpha'|\!+\! \frac{d}{2}\!+\! 1\right) \Gamma\!\left( |\alpha| \!+\! \frac{d+1}{2}\right) }\int_{\S^{d-1}}\!\!\left[h_{\alpha'}(\omega)\right]^2 d\omega \\
\myeq{Lemma~\ref{homosphere}}&\quad\quad \alpha_d!\,2^{|\alpha '|+|\alpha |+d+\frac{1}{2}}\,
\frac{\Gamma\!\left(|\alpha '|\!+\!\frac{d}{2}\!+\!1\right)\Gamma\!\left(|\alpha |\!+\!\tfrac{d+1}{2}\right)}{\left(|\alpha '|+|\alpha |+d\right)!} \int_{\R^d} \left[h_{\alpha'}(x')\right]^2 e^{-\frac{1}{2}\|x'\|^2} dx' \\
\myeq{(\ref{ffgamma})}& \quad\; 
\alpha_d!\,\frac{\left(2|\alpha '|\!+\!d\right)!!\,\left(2|\alpha |\!+\!d\!-\!1\right)!!}{\left(|\alpha '|+|\alpha |+d\right)!}\, \underbrace{\sqrt{2\pi}\int_{\R^d} \left[h_{\alpha'}(x')\right]^2 e^{-\frac{1}{2}\|x'\|^2}  dx'}_{=\sqrt{2\pi}^{d+1}\bra h_{\alpha '},h_{\alpha '}\ket} .
\end{align*}
For the proof of (\ref{hth2}), let $i$ be the highest index where $\alpha$ and $\beta$ differ. Due to the recursive nature of (\ref{hth1}) and the definition of $h_\alpha$, we may assume $i=d$, so $\alpha_d \neq \beta_d$. Then we use the calculation above to see that Theorem~\ref{pthm} now implies the vanishing of the integral. Equations (\ref{hth3}) and (\ref{hth4}) follow analogously from Corollary~\ref{pcor}.
\end{proof}
\begin{theorem}\label{thetaCor} Let the bilinear form $\bra\ ,\;\ket$ on $\R[x_0,\ldots x_d]_k$ be defined as in (\ref{spherebracket}). Let $D(d,k):=\det\limits_{|\alpha|,|\beta|=k}\bra x^\alpha ,x^\beta \ket $ be the determinant of its Gram matrix. Then:
$$
D(d,k) = \theta_{d,k}
$$
where $\theta_{d,k}$ is defined as in (\ref{thetaDef}).
\end{theorem}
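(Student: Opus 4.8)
The plan is to leverage the orthogonal basis $(h_\alpha)$ just constructed. Its transition matrix from the monomials is unitriangular, so taking the determinant will collapse the Gram matrix to a product of diagonal norms; I then intend to unwind the recursion (\ref{hth1}) and match the resulting product against $\theta_{d,k}$ using the combinatorial identities of Section~\ref{boring}.

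First I would record the reduction to diagonal norms. Writing $x^\alpha = \sum_\beta T^{-1}_{\alpha\beta} h_\beta$, equations (\ref{hth3}) and (\ref{hth4}) say that $T^{-1}$ is lower triangular with unit diagonal, so $\det T^{-1}=1$. Combined with the orthogonality (\ref{hth2}), the Gram matrix factors as $\G = T^{-1} H (T^{-1})^{t}$ with $H$ diagonal, $H_{\alpha\alpha}=\bra h_\alpha,h_\alpha\ket$. Hence
\[
D(d,k) = \det\G = \prod_{|\alpha|=k}\bra h_\alpha,h_\alpha\ket .
\]
This turns the problem into evaluating an explicit product of norms.

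Next I would set up an induction on the number of variables $d$. Grouping the multi-indices with $|\alpha|=k$ according to the value $\alpha_d=i$, the factor appearing in (\ref{hth1}) depends only on $i$ and not on the truncation $\alpha'$; there are $\binom{k-i+d-1}{d-1}$ admissible $\alpha'$ by (\ref{binomcount}), and $\prod_{|\alpha'|=k-i}\bra h_{\alpha'},h_{\alpha'}\ket = D(d-1,k-i)$. This yields the recursion
\[
D(d,k) = \prod_{i=0}^k \left[\,i!\,\frac{(2k-2i+d)!!\,(2k+d-1)!!}{(2k-i+d)!}\,\right]^{\binom{k-i+d-1}{d-1}} D(d-1,k-i),
\]
with base case $D(0,k)=\bra x_0^k,x_0^k\ket=(2k-1)!!=\theta_{0,k}$. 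Substituting the inductive hypothesis $D(d-1,\cdot)=\theta_{d-1,\cdot}$, the goal becomes a pure identity between the explicit formulas of (\ref{thetaDef}).

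The main obstacle is this final combinatorial verification, and I expect it to split along the parity of $d$, which is exactly why $\theta_{d,k}$ has two cases. The factorial numerator $\prod_{i=0}^k(i!)^{\binom{k-i+d-1}{d-1}}$ is handled directly by (\ref{facprod1}) (after the substitution $j=k-i$), contributing $\prod_{i=1}^k i^{\binom{k-i+d}{d}}$; together with the accumulated first factors of the $\theta_{d-1,j}$, a hockey-stick summation $\sum_{j=i}^k\binom{j-i+d-1}{d-1}=\binom{k-i+d}{d}$ upgrades the exponent to $\binom{k-i+d}{d}\,d$, reproducing the first factor of $\theta_{d,k}$. The delicate part is the remaining product of double factorials $(2k-2i+d)!!$ and $(2k+d-1)!!$ against the reciprocal factorial $(2k-i+d)!$: expanding the double factorials via Definition~\ref{doublefactorial} and collecting the exponent of each integer produces parity-constrained sums of binomial coefficients, and it is precisely the evaluation (\ref{evensum}) — vanishing for even $d$ and equal to $\binom{k+d}{d}$ for odd $d$ — that forces the two distinct closed forms for the second factor of $\theta_{d,k}$. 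Carefully bookkeeping these exponents, case by case on the parity of $d$, completes the induction.
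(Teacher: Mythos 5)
Your proposal is correct in its overall architecture and follows the paper's opening moves exactly: reduce $D(d,k)$ to $\prod_{|\alpha|=k}\bra h_\alpha,h_\alpha\ket$ via the unitriangular Gram--Schmidt transition matrix, then unwind (\ref{hth1}) by grouping multi-indices according to $\alpha_d=i$ (your recursion is the paper's, reindexed by $j=k-i$), with base case $D(0,k)=(2k-1)!!$; your treatment of the factorial part via (\ref{facprod1}) plus the hockey-stick summation, producing the factor $\prod_{i=1}^{k} i^{\binom{k-i+d}{d}d}$, is also correct and matches the paper. Where you diverge is the induction scheme for the remaining double-factorial part. You propose a single induction on $d$: substitute $D(d-1,\cdot)=\theta_{d-1,\cdot}$ and verify the full product identity for $D(d,k)\big/\prod_{j}D(d-1,j)$ in one stroke. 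The paper instead runs a double induction on $(d,k)$ and only verifies the telescoped ratio $\frac{D(d,k+1)}{D(d,k)\,D(d-1,k+1)}=\frac{\theta_{d,k+1}}{\theta_{d,k}\,\theta_{d-1,k+1}}$. This is not a cosmetic difference: in the ratio, almost all double-factorial factors cancel, and the exponent bookkeeping collapses to exactly (\ref{binomdiff}) together with the single \emph{complete} parity sum (\ref{evensum}). In your direct evaluation, collecting the exponent of a given integer in $\prod_{j}\bigl[(2j+d)!!\,(2k+d-1)!!/(j+k+d)!\bigr]^{\binom{j+d-1}{d-1}}$ and in the accumulated second factors of the $\theta_{d-1,j}$ produces \emph{partial} parity-constrained hockey-stick sums, whose cutoff depends on the integer considered; (\ref{evensum}) as stated does not evaluate these, so you would need to derive variants of it (or, in effect, reintroduce an induction on $k$ --- which is precisely the paper's device). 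The identity you are left to check is true (it holds in small cases, e.g.\ $d=1$), so your plan can be completed, but the final step you dismiss as ``careful bookkeeping'' is substantially heavier than sketched, and the ratio trick is what the paper uses to make it tractable.
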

\begin{proof} This is a double induction over $k$ and $d$. First check that $D(d,0) = \theta_{d,0}=1$ and $D(0,k) = \theta_{0,k} =(2k-1)!!$.
From the above theorem it is clear that $D(d,k)=\prod_{|\alpha | =k}\bra h_\alpha ,h_\alpha \ket$. Since $\{ |\alpha| = k \} = \bigcup_{j=0}^k  \{ |\alpha'| = j \} \times \{ k-j \}  $, we have from (\ref{hth1}):
$$
D(d,k) = \prod_{j=0}^k D(d\!-\!1,j) \prod_{|\alpha'| = j} (k-j)!\,\frac{(2j+d)!!\,(2k+d-1)!!}{(j+k+d)!},
$$
hence we get the ratio \vspace{-1mm}
\begin{align*}
R(d,k) \;&\coloneqq\; \frac{D(d,k)}{\prod_{j=0}^k D(d\!-\!1,j) } \;=\; \prod_{j=0}^k \left[(k-j)!\frac{(2j+d)!!\,(2k+d-1)!!}{(j+k+d)!} \right]^{\binom{j+d-1}{d-1}} \\
&\qquad\myeq{(\ref{facprod1})}\; \prod_{j=0}^k\left[ \frac{(2j+d)!!}{(j\!+\!k\!+\!d)!!}\right]^{\binom{j+d-1}{d-1}}(2k\!+\!d\!-\!1)^{\binom{k+d}{d}} \;\prod_{i=1}^k i^{\binom{k-i+d}{d}} .
\end{align*}
We will now show the principal inductive step: $\frac{D(d,k+1)}{D(d,k)D(d-1,k+1)}=\frac{\theta_{d,k+1}}{\theta_{d,k}\theta_{d-1,k+1}}$. The left hand side clearly equals
\begin{align*}
 \frac{R(d,k+1)}{R(d,k)} &= \frac{(2k\!+\!d\!+\!2)!!^{\binom{k+d}{d-1}}\,(2k\!+\!d\!+\!1)^{\binom{k+d+1}{d}}\,(2k\!+\!d\!+\!1)!!^{\binom{k+d}{d-1}}}{(2k\!+\!d\!+\!2)!^{\binom{k+d}{d-1}} \prod\limits_{j=0}^k(j\!+\!k\!+\!d\!+\!1)^{\binom{j+d-1}{d-1}}} \prod_{i=1}^{k+1} i^{\binom{k-i+d}{d-1}}\\
&= \frac{(2k+d+1)^{\binom{k+d}{d}}}{\prod\limits_{i=k+d+1}^{2k+d+1}i^{ \binom{k-i+d}{d-1}} }\prod_{i=1}^{k+1} i^{\binom{k-i+d}{d-1}}.
\end{align*}
To simplify the right hand side a little bit, we split $\theta_{d,k} = A(d,k)B(d,k)$ with $A(d,k)\coloneqq\prod_{i=1}\limits^k i^{\binom{k-i+d}{d}d}$, and $B(d,k)$ the complementary factor of $\theta_{d,k}$ depending on the parity of $d$. For $A(d,k)$ we have:
\begin{align*}
\frac{A(d,k+1)}{A(d,k)A(d-1,k+1)} &=
 \prod_{i=1}^{k+1}i^{\binom{k-i+d+1}{d}d-\binom{k-i+d}{d}d -\binom{k-i+d}{d-1}(d-1)} \ 
 \myeq{(\ref{binomdiff})}\ \prod_{i=1}^{k+1} i^{\binom{k-i+d}{d-1}},
\end{align*}
while the other factor $B(d,k)$ gives, for even $d$,
\begin{gather*}
\frac{B(d,k+1)}{B(d,k)B(d-1,k+1)} = \frac{ (2k\!+\!d\!+\!1)^{\binom{-k-1}{d}} \prod\limits_{\substack{i=1 \\ i\ \text{odd} }}^{2k+d+1}i^{\binom{k-i+d+1}{d}-\binom{k-i+d}{d}}  }{\prod\limits_{i=1}^{k+\frac{d}{2}} i^{\binom{k-i+d}{d-1}} \prod\limits_{\substack{i=1 \\ i\ \text{even} }}^{2k+d} \left(\frac{i}{2}\right)^{- \binom{k-i+d}{d-1}} } \\
\myeq{(\ref{binomdiff})}\ \frac{ (2k\!+\!d\!+\!1)^{\binom{k+d}{d}}  \prod\limits_{i=1}^{2k+d+1}i^{\binom{k-i+d}{d-1}}  }{ \prod\limits_{i=1}^{k+\frac{d}{2}} i^{\binom{k-i+d}{d-1}} \prod\limits_{\substack{i=1 \\ i\ \text{even} }}^{2k+d} 2^{ \binom{k-i+d}{d-1}} } 
\ \myeq{(\ref{evensum})} \ \frac{ (2k\!+\!d\!+\!1)^{\binom{k+d}{d}} }{ \prod\limits_{i=k+d+1}^{2k+d+1}i^{ \binom{k-i+d}{d-1}} },
\end{gather*}
but also for odd $d$,
\begin{gather*}
\frac{B(d,k+1)}{B(d,k)B(d-1,k+1)} = \frac{ (k+\tfrac{d+1}{2})^{-\binom{-k-1}{d}} \prod\limits_{i=1}^{k+\frac{d+1}{2}} i^{\binom{k-i+d}{d-1}-\binom{k-2i+d}{d-1}}  }{ \prod\limits_{\substack{i=1 \\ i\ \text{odd} }}^{2k+d+1}i^{\binom{k-i+d}{d-1}} } \\
= \frac{ (k+\tfrac{d+1}{2})^{\binom{k+d}{d}} \prod\limits_{i=1}^{k+\frac{d+1}{2}} i^{\binom{k-i+d}{d-1}}  \prod\limits_{\substack{i=1 \\ i\ \text{even} }}^{2k+d} 2^{ \binom{k-i+d}{d-1}}}{ \prod\limits_{i=1}^{2k+d+1}i^{\binom{k-i+d}{d-1}}} 
\ \myeq{(\ref{evensum})} \ \frac{ (2k\!+\!d\!+\!1)^{\binom{k+d}{d}} }{ \prod\limits_{i=k+d+1}^{2k+d+1}i^{ \binom{k-i+d}{d-1}} }. \qedhere
\end{gather*}\nobreak
\end{proof}

\section{Application in Hyperk\"ahler geometry} \label{hyper}
Let $X$ be a compact Hyperk\"ahler manifold of complex dimension $2k$. The second cohomology group $H^2(X,\Z)$ comes with an integral quadratic form, called the Beauville--Bogomolov form $q_X$, which can be computed by an integration over some cup--product power, see \cite[Subsection~2.3]{OGrady}:
\begin{equation} \label{fujiki}
\int_X \alpha ^{2k} = (2k-1)!!\,c_X q_X(\alpha)^k,\qquad \alpha\in H^2(X,\Z).
\end{equation}
This equation is referred to as the Beauville--Fujiki relation. The constant $c_X\in\Q$ is chosen such that the quadratic form $q_X$ is indivisible and its signum is such that $q_X(\sigma + \bar{\sigma}) > 0$ for a holomorphic two-form $\sigma$ with $\int_X\sigma\bar{\sigma} = 1$. There is an alternative description, as shown in \cite[Chap.~23]{Huybrechts}. Up to a scalar factor $\tilde{c}$, $q_X$ is equal to:
\begin{equation}\label{bb}
 \tilde{c}\,q_X(\alpha) = \frac{k}{2}\int_X \alpha^2 (\sigma\bar{\sigma})^{k-1} + (1-k)\left(\int_X\alpha\,\sigma^{n-1}\bar{\sigma}^{n}\right)\left(\int_X\alpha\,\sigma^{n}\bar{\sigma}^{n-1}\right).
\end{equation}
Now $q_X$, by polarization, gives rise to a symmetric bilinear form $\left<\ ,\;\right>$ on $H^2(X,\Z)$, namely $2\left<\alpha,\beta\right> \coloneqq q_X(\alpha+\beta)-q_X(\alpha) -q_X(\beta)$. On the other hand, from (\ref{fujiki}) one deduces again by polarization, as shown in the introduction, see also \cite[Eq.~3.2.4]{OGrady} that:
\begin{equation}
 \int_X \alpha_1\wedge\ldots\wedge\alpha_{2k} = c_X \bra \alpha_1\ldots\alpha_k\,,\,\alpha_{k+1}\ldots\alpha_{2k}\ket,
\end{equation}
with the induced form $\bra\ ,\;\ket$ on $\Sym^kH^2(X,\Z)$, according to Definition \ref{formdef}. The discriminant of $\Sym^kH^2(X,\Z)$ can be computed with Theorem~\ref{maintheorem}. Since the Poincar\'e pairing $(\beta_1,\beta_2)_X \coloneqq \int_X\beta_1\wedge\beta_2$ gives $H^{2k}(X,\Z)$ the structure of a unimodular lattice, we have got an embedding of lattices:
\begin{equation}
\Big( \Sym^kH^2(X,\Z),\: c_X\!\bra\ ,\;\ket\Big) \longrightarrow \Big(H^{2k}(X,\Z),\;(\ ,\;)_X\Big).
\end{equation}
In general, this embedding is not primitive.

From Theorem \ref{maintheorem} and Proposition~\ref{TorsionQuotient} we now get some interesting corollaries, by looking at the prime factors contained in (\ref{maintheorem1}):
\begin{corollary}
Let $X$ be a compact Hyperk\"ahler manifold of complex dimension $2k$. Denote $b_2$ resp.~$d_2$ the rank and the discriminant of $H^2(X,\Z)$. Define a set of integers $Z$ by
$$
Z \coloneqq \{c_X^{b_2} d_2\} \,\cup\, \{1,\ldots ,k\} \,\cup\, \left\{
\begin{array}{*2{l}p{5cm}}
 \{i \in\Z\,|\,k+b_2 \leq i \leq 2k\!+\!b_2\!-\!2,\; i\; \text{odd}\}   &\text{if }b_2\text{ is odd}, \\
 \{i\in\Z \,|\, \frac{k+b_2}{2}\leq i \leq k\!+\!\frac{b_2}{2}\!-\!1\}   &\text{if }b_2\text{ is even}.
\end{array}
\right.
$$
Then the discriminant of $\Sym^kH^2(X,\Z)$ and hence the torsion part of the quotient
$$
\frac{H^{2k}(X,\Z)}{\Sym^kH^2(X,\Z)}
$$
contain only prime factors that divide at least one of the numbers contained in $Z$.
\end{corollary}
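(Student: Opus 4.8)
The plan is to compute $\discr\big(\Sym^kH^2(X,\Z)\big)$ explicitly from Theorem~\ref{maintheorem} and then read off its prime factors directly from the three factors that appear. Write $d=b_2-1$. In the monomial basis the Gram matrix of the integral form $c_X\bra\ ,\;\ket$ is $c_X\G$, so its determinant is $c_X^{\,r_{d,k}}\det\G$, where $r_{d,k}=\binom{k+d}{d}$ is the rank of $\Sym^kH^2$. Since $d_2=|\det G|$, Theorem~\ref{maintheorem} yields
\[
\discr\big(\Sym^kH^2(X,\Z)\big)=|c_X|^{\,r_{d,k}}\,d_2^{\binom{d+k}{d+1}}\,\theta_{d,k}.
\]
By Proposition~\ref{TorsionQuotient}, applied to the embedding of $\Sym^kH^2(X,\Z)$ into the unimodular lattice $H^{2k}(X,\Z)$, the order of the torsion part of the quotient divides this discriminant; hence it suffices to show that every prime factor of the right-hand side divides some element of $Z$.

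For the first two factors this is immediate, since the exponents are irrelevant to the set of prime divisors: a prime dividing $|c_X|^{\,r_{d,k}}d_2^{\binom{d+k}{d+1}}$ divides $c_X$ or $d_2$, and any such prime divides $c_X^{b_2}d_2\in Z$ (which is, up to sign, the discriminant of $H^2(X,\Z)$ rescaled by $c_X$). The real content is therefore the combinatorial factor $\theta_{d,k}$, which I would treat using the explicit formula (\ref{thetaDef}), splitting according to the parity of $d$, equivalently of $b_2$.

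In both parities the leading product $\prod_{i=1}^k i^{\binom{k-i+d}{d}d}$ only involves bases $i\in\{1,\ldots,k\}\subset Z$. The key observation for the remaining product is that $\binom{m}{d}=0$ exactly when $0\leq m\leq d-1$; with $m=k-i+d$ this means $\binom{k-i+d}{d}$ vanishes precisely for $k+1\leq i\leq k+d$. For even $d$ this annihilates the odd bases $i$ in the gap $k<i<k+b_2$, so the surviving bases in $\prod_{i\ \mathrm{odd}} i^{\binom{k-i+d}{d}}$ beyond $\{1,\ldots,k\}$ are exactly the odd $i$ with $k+b_2\leq i\leq 2k+b_2-2$, which is the odd-index set in $Z$. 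For odd $d$ the exponent $\binom{k-i+d}{d}-\binom{k-2i+d}{d}$ reduces to $-\binom{k-2i+d}{d}$ once $i\geq k+1$, and the same vanishing criterion applied to $k-2i+d$ shows it is zero for $i\leq\tfrac{k+d}{2}$; hence the surviving bases beyond $\{1,\ldots,k\}$ lie in $\tfrac{k+b_2}{2}\leq i\leq k+\tfrac{b_2}{2}-1$, which is exactly the even-$b_2$ index set in $Z$.

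The main obstacle is this final bookkeeping: one must verify that the index ranges produced by the vanishing of the binomials line up exactly with the ranges defining $Z$, keeping careful track of the parity of $k+b_2$ (which fixes the integer endpoint $\lceil\tfrac{k+b_2}{2}\rceil$) and of the sign of the exponents. In particular the negative exponents in the odd-$d$ product only push factors into a denominator, which, since $\theta_{d,k}$ is a positive integer, cannot introduce new primes; so it is harmless that those bases lie in the same range already accounted for. Everything else is a direct substitution into the formulas already established, together with the elementary fact that for a rational $c_X$ the prime factors actually surviving in the integer $\discr\big(\Sym^kH^2(X,\Z)\big)$ are covered by $c_X^{b_2}d_2$ once any denominator primes of $c_X$, which can only survive through $\theta_{d,k}$, are recognized as lying in the $\theta_{d,k}$-ranges above.
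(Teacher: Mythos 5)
Your proposal is correct and follows exactly the route the paper intends (and leaves to the reader): compute $\discr\bigl(\Sym^kH^2(X,\Z)\bigr)=|c_X|^{\binom{k+d}{d}}d_2^{\binom{d+k}{d+1}}\theta_{d,k}$ from Theorem~\ref{maintheorem}, invoke Proposition~\ref{TorsionQuotient} against the unimodular lattice $H^{2k}(X,\Z)$, and locate the prime factors of $\theta_{d,k}$ via the vanishing of $\binom{k-i+d}{d}$ (resp.\ $\binom{k-2i+d}{d}$) on the index gaps, which is precisely the bookkeeping that produces the two parity-dependent ranges in $Z$. Your range computations check out in both parities, and your handling of possibly negative exponents (harmless since $\theta_{d,k}$ is a positive integer) is sound, so the write-up supplies correctly the details the paper only gestures at with ``by looking at the prime factors contained in (\ref{maintheorem1})''.
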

For the known examples of compact Hyperk\"ahler manifolds in higher dimensions, we can use the list given in \cite[Table~1]{OGrady}:
Let $S^{[k]}$ for $k\geq 2$ be the Hilbert scheme of $k$ points on a K3 surface $S$, let $A^{[[k]]}$ be the generalized Kummer variety of a torus $A$ and let $OG_6$ and $OG_{10}$ be the 6-- resp. 10--dimensional O'Grady manifold. Let $X$ be deformation equivalent to one of these. Then we have:

\begin{center}
\begin{tabular}{ p{2cm} p{2cm} p{2cm} p{2cm} p{2cm} }
$X$ 		& $\dim X$	& $b_2$ & $d_2$	& $c_X$ \\ \hline 
$S^{[k]}$ 	& $2k$ 		& $23$ 	& $ 2(k-1)$		& $1$	\\
$A^{[[k]]}$	& $2k$ 		& $7$	& $ 2(k+1)$		& $k+1$ \\
$OG_6$		& $6$		& $8$	& $4$			& $4$ 	\\
$OG_{10}$	& $10$		& $24$	& $3$			& $1$ 	
\end{tabular}
\end{center}
In particular, the torsion part of $\frac{H^{2k}(X,\Z)}{\Sym^kH^2(X,\Z)}$ contains no prime factors bigger than
\begin{itemize}
\item $3$, if $X\defeq OG_6$,
\item $5$, if $X\defeq OG_{10}$.
\end{itemize} 
\begin{remark}
The case $X\defeq S^{[2]}$ was already studied in \cite[Prop.~6.6]{BNS}, using explicit calculations. It is special, because $\Sym^2H^2(S^{[2]},\Z)$ and $H^4(S^{[2]},\Z)$ have the same rank. So Proposition~\ref{squareDiscr}, together with Theorem~\ref{maintheorem} imply that the cardinality of the quotient is precisely $\sqrt{2^{24}\cdot2^{22}\cdot(22+3)} = 2^{23}\cdot5$.
\end{remark}
\begin{remark}
If the exact value of the torsion part of $H^{2k}/(\Sym^kH^2)$ is known, we can say more, by applying Corollary~\ref{latticeCor}. For instance,
in the case $X\defeq S^{[3]}$, Theorem~\ref{maintheorem} says that the discriminant of $\Sym^3H^2$ is equal to $2^{1106}\cdot 3^{92}$. But on the other hand, according to \cite[Prop.~2.4]{Kapfer}, the torsion part of $H^6/(\Sym^3H^2)$ has order $2^{277}\cdot 3^{46}$. So it follows that both the orthogonal complement and the primitive overlattice of $\Sym^3H^2$ must have discriminant $\frac{2^{1106}\cdot 3^{92}}{(2^{277}\cdot 3^{46})^2}=2^{552}$.

If the torsion part of $H^{2k}/(\Sym^kH^2)$ is unknown, then Corollary~\ref{latticeCor} still allows the conclusion, that the square-free parts of the discriminants of $\Sym^kH^2$ and its orthogonal complement are equal.
\end{remark}

\emph{Acknowledgements.} We thank Samuel Boissi\`ere, Cl\'ement Chesseboeuf, K\'evin Tari and Yuan Xu for useful conversations and the University of Poitiers for its hospitality. The author was supported by a DAAD grant.

\bibliographystyle{amsplain}

\end{document}